\documentclass{article}[12pt]
\usepackage{a4}
\usepackage{amsfonts}
\usepackage[utf8]{inputenc}
\usepackage[T1]{fontenc}
\usepackage{amsmath,amssymb}
\usepackage{mathrsfs}
\usepackage[all]{xy}
\usepackage{gastex}
\usepackage{url}
\usepackage{tikz}
\usetikzlibrary{decorations.pathmorphing}
\usetikzlibrary{decorations.markings}
\usetikzlibrary{chains}
\usepgflibrary{arrows}
\usepackage{pgffor}
\usepackage{verbatim}
\usepackage{titlesec}
\titleformat\section{}{}{0pt}{\Large\scshape\bfseries\filcenter\thesection{} - }
\topmargin=2mm
\headheight=0cm
\headsep=0cm
\textheight=220mm
\textwidth=144mm
\oddsidemargin=10mm
\footskip=8mm

\usepackage{lmodern}
\usepackage[all]{xy}
\usepackage[miktex]{gnuplottex}
\usepackage{tikz-cd}
\usepackage{xlop}
\usepackage{multicol}

\usepackage{enumitem}

\usepackage{gastex}
\usepackage{tikz}
\usetikzlibrary{decorations.pathmorphing}
\usetikzlibrary{decorations.markings}
\usetikzlibrary{chains}
\usepgflibrary{arrows}
\usepackage{pgffor}
\usepackage{verbatim}


\usepackage{pifont}

\usepackage{tabularx} 
\usepackage{multirow} 
\usepackage{multicol} 
\usepackage{arydshln} 
\usepackage{fancybox} 
\usepackage{multicol} 
\usepackage{array} 
\usepackage{fancybox}

\usepackage{graphics} 
\usepackage{graphicx} 
\usepackage{pstricks,pst-node} 
\usepackage{tikz} 



\setcounter{secnumdepth}{5}
\usepackage{amssymb}
\usepackage{amsmath}
\usepackage{amsthm}

\theoremstyle{definition}
\newtheorem{Def}{Définition}[section]

\theoremstyle{plain}
\newtheorem{Prop}[Def]{Proposition}

\newtheorem{Cor}[Def]{Corollaire}
\newtheorem{Th}[Def]{Théorème}
\newtheorem{Lem}[Def]{Lemme}

\def\mod{\mathrm{mod}}

\def\Puis{\mathrm{Puis}}
\def\Gal{\mathrm{Gal}}

\def\mod{\mathrm{mod}}

\def\o{\mathrm{o}}
\def\t{\mathrm{t}}
\def\res{\mathrm{res}}

\def\Stab{\mathrm{Stab}}
\def\limproj{\mathop{\oalign{\hbox{\rm lim}\cr
\hidewidth$\longleftarrow$\hidewidth\cr}}}
\def\limind{\mathop{\oalign{\hbox{\rm lim}\cr
\hidewidth$\longrightarrow$\hidewidth\cr}}}


\font\twelvebb=msbm10 
\font\fivebb=msbm5
\font\sevenbb=msbm7
\newfam\bbfam  \scriptscriptfont\bbfam=\fivebb
\textfont\bbfam=\twelvebb
\scriptfont\bbfam=\sevenbb
\def\bb{\fam\bbfam\twelvebb}

\def\N{{\bb N}}

\def\limproj{\mathop{\oalign{\hbox{\rm lim}\cr
\hidewidth$\longleftarrow$\hidewidth\cr}}}
\def\limind{\mathop{\oalign{\hbox{\rm lim}\cr
\hidewidth$\longrightarrow$\hidewidth\cr}}}

\begin{document}

\tikzset{
    moyen/.style={thick, postaction={decorate},
        decoration={markings,mark=at position .6 with {\arrow[>=stealth]{>}}}},
    moyenn/.style={thick, postaction={decorate},
        decoration={markings,mark=at position .6 with {\arrow[>=stealth]{<}}}},    
    gros/.style={line width=2pt, postaction={decorate},
        decoration={markings,mark=at position .6 with {\arrow[>=stealth]{>}}}},
     gross/.style={line width=2pt, postaction={decorate},
        decoration={markings,mark=at position .6 with {\arrow[>=stealth]{<}}}},
groscourt/.style={line width=2pt, postaction={decorate},
        decoration={markings,mark=at position .7 with {\arrow[>=stealth]{>}}}}
}

\begin{center}
\LARGE Groupes profinis presqu'amalgamés.
\end{center}
\vskip 3mm
\noindent
\centerline{Ndeye Coumba Sarr}
\vskip 2mm
\noindent
\centerline{Université de Caen}
\vskip 1cm
\noindent
{\small {\bf Résumé.--- 
Un des résultats fondamentaux de la théorie de Bass-Serre est le théorème suivant : \textit{un groupe est amalgamé si et seulement si il agit sur un arbre avec comme domaine fondamental un segment}. Dans cet article nous donnons un analogue pour les groupes profinis de ce résultat en utilisant la théorie des prographes de Deschamps et Suarez introduite dans \cite{DesSua}.}
\vskip 3mm
\noindent
{\bf Abstract.--- A fundamental result of Bass-Serre theory is the following theorem: an abstract group is a free product with amalgamation if and only if it acts on a tree with a segment as fundamental domain. In this article, an analogous result for profinite groups will be given, using the theory of prographs of Deschamps and Suarez introduced in \cite{DesSua}. }
\vskip 8mm
\noindent

\section{Introduction}
Dans la théorie de Bass-Serre (exposée dans \cite{AST_1983__46__1_0}), on s'intéresse à l'étude de la structure des groupes agissant sans inversion sur un arbre. L'un des premiers résultats affirme qu'un groupe est libre si et seulement s'il agit librement sur un arbre. Dans leur article \cite{DesSua}, Deschamps et Suarez ont étudié un analogue profini de ces groupes libres : les groupes profinis presque libres ( i.e. possèdant un sous-groupe discret libre dense). Ils les caractérisent en terme d'action prolibre sur un proabre. Moralement, les notions de : prographes, proarbres et proactions sont des passages à la limite projective des notions de graphes, arbres et action dans la situation discrète. 
\vskip 2mm
Un autre cas particulier de la théorie de Bass-Serre est celui des groupes agissant sur un arbre avec comme domaine fondamental un arbre. Ce sont exactement les groupes amalgamés. Un exemple d'application, où le domaine fondamental est un segment, consiste à faire agir $SL_2(\mathbb{Z})$ sur le demi-plan de Poincaré, ce qui permet de retrouver le fait bien connu que $SL_2(\mathbb{Z})$ est le produit amalgamé de $\mathbb{Z}/6\mathbb{Z}$ par $\mathbb{Z}/4\mathbb{Z}$ sur $\mathbb{Z}/2\mathbb{Z}$.
\vskip 2mm 
\noindent
L'objet de cet article est de donner un analogue du théorème principal de \cite{DesSua} pour une autre catégorie de groupes profinis : les groupes profinis presqu'amalgamés ( i.e. possèdant un sous-groupe discret amalgamé dense). Les objets et méthodes utilisés par Deschamps et Suarez dans leur article se placent dans un cadre tout à fait général. Nous nous servons ainsi de ces outils pour obtenir les deux résultats principaux de cet article :
\begin{Th}Soit $\Omega$ un groupe profini de rang dénombrable qui agit à domaine fondamental arbre fini sur un proarbre $\mathcal{P}$.  Alors $\Omega$ est presqu'amalgamé.
\end{Th}
\begin{Th}
Soit $\Omega$ un groupe profini de rang dénombrable, finiement presqu'amalgamé. Alors $\Omega$ agit à domaine fondamental arbre fini sur un proarbre. 
\end{Th}
\vskip 2mm
\noindent
Ici, l'action d'un groupe profini sur un prographe consiste à faire agir les groupes du système projectif de groupes finis associés au groupe profini sur chaque étage d'un système projectif de graphes (prographes), de sorte que l'action à chaque étage possède un domaine fondamental qui est un arbre fini (action à domaine fondamental arbre fini). Nous renvoyons au $§3$ pour des définitions plus précises.
\vskip 2mm
Dans le premier paragraphe de ce texte, nous fixons quelques notations et rappelons les notions introduites dans \cite{DesSua}. Nous y montrons aussi que la limite inductive d'un prographe associé à un amalgame de groupes finis est un arbre (c'est un prographe sylvestre). Dans le paragraphe suivant, nous introduisons la notion de groupe profini presqu'amalgamé et nous démontrons le résultat principal qui caractérise ces groupes. Dans le dernier paragraphe, nous étudions l'exemple du groupe profini $\mathbb{Z}_{p} \rtimes \mathbb{Z}/2 \mathbb{Z}$, on montre qu'il
est presqu'amalgamé. Ensuite, nous utilisons cet exemple pour obtenir une illustration galoisienne, ce qui nous permet de conclure que le groupe $\Gal(\Puis_p(\mathbb{C})/\mathbb{R}((t)))$ est presqu'amalgamé.
\vskip 2mm
\noindent
Je tiens à remercier Bruno Deschamps et Jérôme Poineau pour leurs précieux conseils et leurs suggestions tout au long de la rédaction de cet article.
\section{Résultats préliminaires}
Dans cette section nous rappelons les bases de la théorie combinatoire des groupes profinis de Deschamps et Suarez. Nous nous référons à \cite{DesSua}. Nous démontrons aussi quelques résultats analogues relatifs aux groupes amalgamés et aux actions à domaine fondamental un arbre fini. Avant cela, fixons quelques conventions et  notations. Nous supposons connues les bases de la théorie de Bass-Serre développée dans \cite{AST_1983__46__1_0} et \cite{raghuram2002groups}. 
\vskip 2mm

Dans ce texte, étant donné un graphe orienté $\Gamma$, on notera $\mathcal{S}(\Gamma)$ (resp. $\mathcal{A} (\Gamma))$ l'ensemble des sommets (resp. des arêtes orientées) de $\Gamma$. Pour toute arête orientée $a  \in  \mathcal{A} (\Gamma)$, on notera $\o(a)$ (resp. $\t(a))$ l'origine (resp. le sommet terminal) de $a$. Quand on fera agir un groupe sur  un graphe, il s'agira d'une action à gauche et cette action sera toujours une action de graphes orientés et sans inversion. Un morphisme de graphes $\theta : \Gamma_1 \to \Gamma_2 $ sera vu comme une unique application de l'ensemble  $\mathcal{ S} (\Gamma_1) \cup \mathcal{A} (\Gamma_1)$ vers l'ensemble $\mathcal{ S} (\Gamma_2) \cup \mathcal{A} (\Gamma_2)$.
\vskip 2mm
Par commodité, on notera indifféremment dans ce texte $\overline{x}$ la classe d'équivalence d'un objet $x$, qu'il s'agisse de la classe d'une arête ou d'un sommet dans un graphe quotient, de la classe
d'un élément dans un groupe quotient ou encore même de la classe d'un élément dans une limite inductive d'ensembles. 
Étant donnés $(G_i)_{i\in I}$ une famille de groupe, $A$ un groupe et, pour tout $ i \in I$, $A \to G_i$ un homomorphisme injectif. La limite inductive de la famille $(A,G_i)$ est notée  $G= \ast_AG_i $ et on l'appelle la somme amalgamée des $G_i$ suivant $A$.
\vskip 2mm

Un prographe est un système projectif de graphes $(\Gamma_i, \theta_{ji})_{i\in I}$ indexé sur un ensemble pré-ordonné d'indices $(I,\leq)$ filtrant à droite, tel que pour tout $i\leq j$ le morphisme de graphes $\theta_{ji}$ soit surjectif. Une section graphiquement cohérente d'un prographe est une famille inductive de sections ensemblistes $(s_{ij})_{i\leq j}$ de $(\theta_{ji})_{i\leq j}$ vérifiant que : 
\vskip 2mm
\noindent
$(C_1)$ pour tout $i\in I$ et tout $a \in \mathcal{A}(\Gamma_i)$ il existe $i_0\geq i$ tel que pour tout $k\geq i_0$
$$\begin{array}{lll}
o(s_{ik}(a))&=&s_{i_0k}(o(s_{ii_0}(a)))\\
t(s_{ik}(a))&=&s_{i_0k}(t(s_{ii_0}(a)))\\
\end{array}$$

\vskip 2mm
\noindent
Une section graphiquement cohérente d'un prographe fournit deux systèmes inductifs d'ensembles, un premier, à partir des sommets et un second pour les arêtes. Ce qui permet de construire un graphe "limite inductive" du prographe de départ. 
\vskip 2mm
\noindent
 Soit $(\Gamma_i, \theta_{ji})_{i\in I}$  un prographe muni d'une section graphiquement cohérente $s=(s_{ij})_{i\leq j}$. Les deux ensembles $\limind_s \mathcal{S}(\Gamma_i)$ et $\limind_s \mathcal{A}(\Gamma_i)$ permettent de définir canoniquement les sommets et les arêtes d'un graphe. Ce graphe est appelé limite inductive du prographe $(\Gamma_i, \theta_{ji})_{i\in I}$ relativement à la section graphiquement cohérente $s=(s_{ij})_{i\leq j}$ et est noté $\limind_s \Gamma_i$.
\vskip 2mm
\noindent
Une section graphiquement cohérente $s$ d'un prographe $(\Gamma_i, \theta_{ji})_{i\in I}$ sera dite arborescente si $\limind_s \Gamma_i$ est un arbre. Un prographe sera dit sylvestre s'il possède une section arborescente. On appellera proarbre la donnée d'un prographe sylvestre muni d'une section arborescente.

\vskip 2mm

Le lemme 6 de \cite{DesSua} montre qu'une section graphiquement cohérente d'un prographe permet de reconstruire l'arbre sur lequel un groupe qui est amalgame de groupes finis agit. Nous l'adaptons ici en ajoutant la donnée de sous-arbres finis.
\begin{Lem} 
Considérons un prographe $(\Gamma_n, \theta_{mn})_{n \in \mathbb{N}}$ indexé par $\mathbb{N}$ (muni de l'ordre naturel) et pour tout $n\in \mathbb{N}$ arbre fini $T_n \in \Gamma_n$. Soient $\Gamma$ un graphe, $T$ sous-arbre fini de $\Gamma$, et pour tout $n \in \mathbb{N}$,  $\theta_n : \Gamma \to \Gamma_n$ épimorphisme de graphes vérifiant que pour tout $n$, $\theta_n(T)=T_n$ et que pour tout $ m \geq n$, le diagramme suivant commute :\\
$$
\begin{tikzcd}
 \Gamma \arrow[r, twoheadrightarrow, "\theta_m" ] \arrow[dr, twoheadrightarrow, "\theta_n" ] & \Gamma_m \arrow[d,twoheadrightarrow,"\theta_{mn}" ]\\ 
 & \Gamma_n
 \end{tikzcd}
 $$
Supposons qu'il existe une suite croissante $(\Delta_k)_{k\in \mathbb{N}}$ de parties de $\Gamma$ telle que $\displaystyle \bigcup_{k\in \mathbb{N}}\Delta_k=\Gamma$, pour tout $k \in \mathbb{N}$, $T \subset \Delta_k$, et qu'il existe une suite strictement croissante d'entiers $(n_k)_k$ telle que, pour tout $k\in \mathbb{N}$, $\Delta_{k+1}$ s'injecte (par $\theta_{n_{k+1}}$) dans $\Gamma_{n_{k+1}}$ et se surjecte (par $\theta_{n_k}$) sur $\Gamma_{n_k}$ :
$$
\begin{tikzcd}
 \Delta_{k+1} \arrow[r, hook, "\theta_{n_{k+1}}" ] \arrow[dr, twoheadrightarrow, "\theta_{n_k}" ] & \Gamma_{n_{k+1}} \arrow[d,twoheadrightarrow,"\theta_{n_{k+1}n_k}" ]\\ 
\Delta_k \arrow[u,hook] \arrow[r,hook, "\theta_{n_k}"] & \Gamma_{n_{k}}
 \end{tikzcd}
 $$

Il existe alors une section graphiquement cohérente $s=(s_{nm})_{n <m}$ telle que:

\begin{enumerate}
\item pour tout $n,m \in \mathbb{N}$, $s_{nm}(T_n)=T_m$ et la restriction $s_{nm}|_{T_{n}}$ est un morphisme de graphe ;
\item le morphisme canonique $\sigma : \limind_s \Gamma_n \to \Gamma$ est un isomorphisme ;
\item la famille $(T_n)_n$ définit arbre fini $\overline{T}$ de $\limind_s \Gamma_n$ et $\sigma(T_n)=T$ pour tout $n \in \mathbb{N}$.

\end{enumerate}
\end{Lem}

\begin{proof}[Démonstration]
Commen\c cons par définir, pour $k\geq 0$,une section $\sigma_{n_k}$ de l'application $\theta_{n_k}|_ {\Delta_{k+1}} : \Delta_{k+1} \to \Gamma_{n_k}$ 
$$
\begin{tikzpicture}[baseline= (a).base]
\node[scale=1.5] (a) at (0,0){
\begin{tikzcd}[column sep=4em,row sep=4em]
 \Delta_{k+1} \arrow[r, hook, "\theta_{n_{k+1}}" ] \arrow[dr, twoheadrightarrow, "\theta_{n_k}" ] & \Gamma_{n_{k+1}} \arrow[d,twoheadrightarrow,"\theta_{n_{k+1}n_k}" ]\\ 
\Delta_k \arrow[u,hook] \arrow[r,hook, "\theta_{n_k}"] & \Gamma_{n_{k}} \arrow[ul,dashrightarrow, shift left, "\sigma_{n_k}"]
 \end{tikzcd}
 };
\end{tikzpicture}
 $$
\vskip 2mm
\noindent
$\bullet$ si $x\in \theta_{n_k}(\Delta_k)$, on pose $\sigma_{n_k}(x) =
  (\theta_{n_k}|_ {\Delta_{k}})^{-1}(x)$,
\vskip 2mm
\noindent
$\bullet$ si $x\notin \theta_{n_k}(\Delta_k)$, alors on prend pour
  $\sigma_{n_k}(x)$ un relevé quelconque de $x$ qui soit dans $\Delta_{k+1}$.
 \vskip 2mm
\noindent
  On définit alors une section $s_{n_kn_{k+1}}$ 
$$
\begin{tikzcd}
 \Gamma_{n_{k+1}} \arrow[r, twoheadrightarrow, "\theta_{n_{k+1}n_k}" ]  & \Gamma_{n_k} \arrow[l, bend left, "s_{n_kn_{k+1}}"]\\ 
\end{tikzcd}
 $$
de l'application $\theta_{n_{k+1}n_k}$ en posant $s_{n_kn_{k+1}} = \theta_{n_{k+1}} \circ \sigma_{n_k}$.
\vskip 2mm
\noindent
Comme $T \subset \Delta_k $, on a :
 $$\sigma_{n_k}(T_{n_k})=T.$$
Et on a bien $$s_{n_kn_{k+1}}(T_{n_k})=T_{n_{k+1}}. $$
En effet, d'une part, soient $\alpha \in T_{n_k}$ et $\beta \in T$ tels que $ \alpha=\theta_{n_k}^{-1}(\beta)$, ce qui donne $s_{n_kn_{k+1}}(\alpha)=\theta_{n_{k+1}}(\beta) \in T_{n_{k+1}}$. D'où l'inclusion dans un sens. \\
D'autre part, soient $\gamma  \in T_{n_{k+1}}$ et $\delta \in T$ tels que $\gamma= \theta_{n_{k+1}}(\delta)$, on a $\delta=\sigma_{n_k} \circ \theta_{n_k}(\delta)$ car la restriction $\theta_{n_k}|_T$ à valeur dans $T_{n_k}$ est isomorphisme de graphes. Par conséquent, $\gamma=  \theta_{n_{k+1}} \circ \sigma_{n_k} \circ \theta_{n_k}(\delta)$. Ce qui prouve l'inclusion dans l'autre sens.\\
Soit $a \in T_{n_k}$, on a $s_{n_kn_{k+1}}(a)=\theta_{n_{k+1}}(b)$ où $b \in T$, encore une fois, car la restriction $\theta_{n_k}|_T$ à valeur dans $T_{n_k}$ est isomorphisme de graphes. Ainsi  $s_{n_kn_{k+1}}|_{T_{n_k}}$ est un bien morphisme de graphes.
\vskip 2mm
\noindent
Pour $h\geq 2$, on pose $s_{n_kn_{k+h}} = s_{n_{k+h-1}n_{k+h}} \circ \cdots \circ s_{n_kn_{k+1}} = \theta_{n_{k+h}} \circ \sigma_{n_k}$.
\vskip 2mm
\noindent
La section $(s_{n_kn_{k+h}})_{k,h}$ du sous-prographe $(\Gamma_ {n_k})$ est alors graphiquement cohérente. En effet, si $a\in \mathcal{ A}(\Gamma_{n_k})$, alors il existe $h\geq k$ tel que $\sigma_{n_k} (a), o(\sigma_{n_k} (a)), t(\sigma_{n_k} (a))\in \Delta_h$. Or $\Delta_h$ s'injecte dans $\Gamma_{n_h}$, donc, en vertu de l'égalité $s_{n_kn_{h+m}} = \theta_{n_{h+m}} \circ \sigma_{n_k}$, on en déduit que, pour $m\geq 0$, on a
$$o(s_{n_kn_{h+m}}(a)) = s_{n_hn_{h+m}} (o(s_{n_kn_h}(a)))$$
\vskip 2mm
\noindent
Définissons maintenant, pour un entier $n$ quelconque, une section $s_{n,n+1}$. Pour cela fixons un indice $n_k$, posons $l=n_k$ et considérons les entiers $i$ tel que $n_k\leq l+i\leq n_{k+1}$. Définissons d'abord les sections $s_{l+i,n_{k+1}}$ et $s_{l+i,l+i+1}$ :
$$
\begin{tikzcd}
 \Gamma_{n_{k+1}} \arrow[r, twoheadrightarrow ]  &\cdots \arrow[r, twoheadrightarrow ]& \Gamma_{l+i+1} \arrow[r, twoheadrightarrow ]&\Gamma_{l+i}\arrow[r, twoheadrightarrow ] \arrow[l,bend left, "s_{l+i,l+i+1}"],\arrow[lll,bend right, "s_{l+i,n_{k+1}}"]& \cdots \arrow[r, twoheadrightarrow ] & \Gamma_{n_k}\\ 
\end{tikzcd}
 $$
par récurrence sur $i$ : pour $i=0$, $s_{l,n_{k+1}}$ est déjà définie et on pose $s_{l,l+1}=\theta_{n_{k+1},l+1}\circ s_{l,n_{k+1}}$. Et
$$s_{l,l+1}(T_l)=\theta_{n_{k+1},l+1}\circ s_{l,n_{k+1}}(T_l)=(\theta_{n_{k+1},l+1}(T_{n_{k+1}}))=T_{l+1}$$
Si pour $i-1\geq 0$, les sections $s_{l+i-1,n_{k+1}}$ et $s_{l+i-1,l+i}$ sont définies, et de sorte que $s_{l+i-1,n_{k+1}}(T_{l+i-1})=T_{n_{k+1}}$ et $s_{l+i-1,l+i}(T_{l+i-1})=T_{l+i}$, on définit $s_{l+i,n_{k+1}}$ de la manière suivante :
\vskip 2mm
\noindent
$\bullet$ si $x\in s_{l+i-1,l+i}(\Gamma_{l+i-1})$, on pose $s_{l+i,n_{k+1}}(x)=s_{l+i-1,n_{k+1}}(\theta_{l+i,l+i-1}(x))$,
\vskip 2mm
\noindent
$\bullet$ si $x\notin s_{l+i-1,l+i}(\Gamma_{l+i-1})$, on prend pour $s_{l+i,n_{k+1}}(x)$ n'importe quel relevé de $x$ dans $\Gamma_{n_{k+1}}$. \\
Ainsi $$s_{l+i,n_{k+1}}(T_{l+i})=T_{n_{k+1}}$$
\vskip 2mm
\noindent
Une fois $s_{l+i,n_{k+1}}$ ainsi définie, on définit comme il se doit $s_{l+i,l+i+1}=\theta_{n_{k+1},l+i+1}\circ s_{l+i,n_{k+1}}$. On a donc $$s_{l+i,l+i+1}(T_{l+i})=\theta_{n_{k+1},l+i+1}\circ s_{l+i,n_{k+1}}(T_{l+i})=\theta_{n_{k+1},l+i+1}(T_{n_{k+1}})=T_{l+i+1}.$$
Il est alors clair que l'on a $s_{l+i,n_{k+1}}=s_{n_{k+1}-1,n_{k+1}}\circ \cdots \circ s_{l+i,l+i+1}$.
\vskip 2mm
\noindent
Si $n\leq m$, on pose $s_{nm}=s_{m-1,m}\circ \cdots \circ s_{n,n+1}$ et le système $(\Gamma_n,s_{nm})_n$ est alors inductif.  La section $s = (s_{nm})_{n\leq m}$ est une section inductive qui possède une sous-section $(s_{n_kn_l})_ {k\leq l}$ graphiquement cohérente. Elle est donc elle-même graphiquement cohérente. On a bien que la restriction  $s_{nm}|_{T_{n}}$ est un morphisme de graphes et
$s_{nm}(T_n)=T_m$.
\vskip 2mm
\noindent
Pour $n_k<l<n_{k+1}$, on définit une application $\sigma_l : \Gamma_l \longrightarrow \Delta_{k+2}$ en posant
$\sigma_l = \sigma_ {n_{k+1}} \circ s_{l, n_{k+1}}$. Les applications $\sigma_l$ commutent visiblement au diagramme inductif et, par propriété universelle, on en déduit l'existence d'une application
$$\sigma : \limind_s \Gamma_n \longrightarrow \Gamma$$
\vskip 2mm
\noindent
Si $x\in \Gamma$, il existe $k\geq 0$ tel que
$x\in \Delta_k$ et alors la classe $\overline{\theta_{n_k}(x)}$ ne dépend pas du choix de $k$. En effet si $k_1<k_2$ sont tels que $x\in \Delta_{k_1}\subset \Delta_{k_2}$, alors par construction on a $s_{n_{k_1}n_{k_2}}\circ \theta_{n_{k_1}}(x)=\theta_{n_{k_2}}(x)$ et donc $\overline{\theta_{n_{k_1}}(x)}=\overline{\theta_{n_{k_2}}(x)}$. Ceci permet de définir une application
$$\psi :\Gamma\longrightarrow \limind_s \Gamma_n$$
en posant $\psi(x)=\overline{\theta_{n_k}(x)}$, o\`u $k$ est tel que $x\in \Delta_k$.\\
Et donc 
$$\psi(I)=\overline{\theta_{n_k}(T)}=\overline{T_{n_k}}$$
\vskip 2mm
\noindent
L'application $\psi$ est un morphisme de graphe. En effet, si $a\in \Gamma$ est une arête et $k$ est un entier tel que $a, o(a),t(a)\in \Delta_k$, alors on a alors pour tout $m\geq n_k$, $o(s_{n_km} (\theta_{n_k}(a))) = s_{n_km}(o
(\theta_{n_k}(a)))$ et donc $o(\psi(a)) = \overline{o(\theta_{n_k}(a))} = \psi(o(a))$. On a de même $t(\psi(a)) = \psi(t(a))$ et $\psi$ est donc bien un morphisme. Il est certainement bijectif puisque $\psi\circ\sigma = {\rm Id}$ et $\sigma\circ\psi = {\rm Id}$.
\vskip 2mm
\noindent
Ainsi, $\sigma$ est un isomorphisme canonique (i.e. provenant de la propriété universelle des objets inductifs) entre $\limind_s \Gamma_n$ et $\Gamma$. Cet isomorphisme dépend toutefois du choix de la suite $(\Delta_k)_k$.
\end{proof}
Par application du précédent lemme au cas d'un groupe agissant sur arbre, nous obtenons le théorème suivant : 
\begin{Th} Soit G un groupe dénombrable agissant sur un arbre $\Gamma$ avec comme domaine fondamental un arbre fini $T$. On suppose que les stabilisateurs des sommets de $T$ sont finis. On considère une suite décroissante $H_0 \supset H_1 \supset \cdots$ de sous-groupes distingués d'indice fini de $G$ telle que $\bigcap \limits _{n \in \mathbb{N}} H_n= \{e\}$ et le prographe $(\Gamma_n, \theta_{mn})_n$ donné  par $ \Gamma_n=H_n \setminus \Gamma$. Pour tout $n \in \mathbb{N}$, on note $\theta_n$ l'épimorphisme canonique de $\Gamma$ dans $\Gamma_n$ et on définit l'arbre $T_n:=\theta_n(T)$. Alors il existe une section graphiquement cohérente, s, de $\theta_{mn}$, telle que $ \limind_s \Gamma_n= \Gamma$ et $s_{nm}(T_n)=T_m $.
\end{Th}
\begin{proof}[Démonstration]
On veut vérifier les hypothèses du lemme précédent et l'appliquer.\\
On note, pour $m \geq n$, $\theta_{mn}: \Gamma_m \to \Gamma_n$ l'épimorphisme canonique induit par l'inclusion $H_m \subset H_n$. On obtient ainsi, pour tout $n \leq m$, le diagramme commutatif suivant:  
$$
\begin{tikzcd}
 \Gamma \arrow[r, twoheadrightarrow, "\theta_m" ] \arrow[dr, twoheadrightarrow, "\theta_n" ] & \Gamma_m \arrow[d,twoheadrightarrow,"\theta_{mn}" ]\\ 
 & \Gamma_n
 \end{tikzcd}
 $$
Nous allons construire la suite $(\Delta_k)_{k\in \mathbb{N}}$ du lemme. Pour cela on considère une partie $\Delta \subset \Gamma$ contenant $T$ et composée d'un nombre fini de sommets et de toutes les arêtes ayant pour origine un de ces sommets. Il existe alors un indice $n \geq 0$ tel que $\Delta$ s'injecte par $\theta_n$ dans $\Gamma_n$. En effet, soient $x_1, \cdots, x_k$ les sommets distincts de $\Delta$. Pour tout $1 \leq i < j \leq n$, il existe $n_{ij} \in \mathbb{N}$ tel que $x_i \neq x_j \mod H_{n_{ij}}$. En effet supposons que pour tout $n \in \mathbb{N}$ $x_i=x_j \mod H_n$.
\vskip 2mm
\noindent
Comme $S(\Gamma)=G.S(T)=\{g_i \cdot P_i | g_i \in G, P_i \in S(T) \}$, on peut écrire tout sommet $x_i \in S(\Gamma)$ sous la forme $ x_i=g_i\cdot P_i $, pour un certain $g_i \in G$ et $P_i \in S(T)$.
\vskip 2mm
\noindent
Ainsi, soient $g_i $ et $g_j$ deux éléments de $G$ et $P_i$ et $P_j$ deux sommets de $\Gamma$ tels que $x_i=g_i\cdot P_i$ et $x_j=g_j\cdot P_j$. Alors $x_i=x_j \mod H_{n}$ implique que $P_i=P_j=:P$, $x_i=g_i\cdot P_i= h_n \cdot x_j=h_ng_j\cdot P_j$, avec $ h_n \in H_n $.\\
On en déduit que $ P= g^{-1}_{i}h_ng_j \cdot P$ et donc que  $ g^{-1}_{i}h_ng_j \in \Stab_G(P)$.
\vskip 2mm
\noindent
Puisque $\Stab_G(P)$ est fini, il existe $a \in \Stab_G(P)$ et une infinité d'indices $n_k$ tels que  $ g^{-1}_{i}h_ng_j=a$, d'où $h_{n_k}=g_iag^{-1}_{j}$. On a $g_iag_j^{-1} \neq e $ car $x_i \neq x_j$ ce qui contredit $\bigcap \limits _{n \in \mathbb{N}} H_n= \{e\}$.
\vskip 2mm
\noindent
Notons $n=\displaystyle{\max_{i,j}n_{ij}}$. Alors pour tous $i<j$, $x_i \neq x_j \mod H_n$.\\
Les sommets de $\Delta$ s'envoient injectivement par $\theta_n$ dans $\Gamma_n$. En utilisant le même argument que pour les sommets,  arêtes de $\Delta$ s'envoient aussi  injectivement par $\theta_n$ dans $\Gamma_n$ car puisque le domaine fondamental est fini il y a un nombre fini d'arêtes qui partent de chaque sommet.\\
Comme $G$ est dénombrable et $T$ aussi alors $\Gamma$ l'est aussi car $\Gamma=G\cdot T$, on peut énumérer $S(\Gamma)$ en une suite $(g_n)_n$. Pour $n\geq 0$, on considère la partie $\Delta_n\subset \Gamma$ constituée des sommets $g_0,\cdots ,g_n$ et de toutes les arêtes de $\Gamma$ qui ont pour origine un des $g_i$ avec $i\leq n$. La suite $(\Delta_n)_n$ est donc une suite de parties de $\Gamma$, croissante pour l'inclusion et de réunion valant $\Gamma$ tout entier. \\
Posons $i_0=0$. D'après ce qui précède, il existe un indice $n_0$ tel que $\Delta_{i_0}$ s'injecte par $\theta_{n_0}$ dans $\Gamma_{n_0}$. Comme $\Gamma_{n_0}$ ne compte qu'un nombre fini de sommets, si l'on relève ces sommets dans $\Gamma^{'}$ il existe alors un indice $i_1>i_0$ tel que $\Delta_{i_1}$ contienne tous ces relevés. La partie $\Delta_{i_1}$ se surjectent alors par $\theta_{n_0}$ sur le graphe $\Gamma_{n_0}$. Il existe alors un indice $n_1>n_0$ tel que $\Delta_{i_1}$ s'injecte par $\theta_{n_1}$ dans $\Gamma_{n_1}$. On construit ainsi, par récurrence, deux suites d'indice $i_0<i_1<\cdots $ et $n_0<n_1<\cdots$ telles que, pour tout $k\geq 0$, $\Delta_{i_{k+1}}$ s'injecte (par $\theta_{n_{k+1}}$) dans $\Gamma_{n_{k+1}}$ et se surjecte (par $\theta_{n_k}$) dans $\Gamma_{n_k}$. \\
On peut alors appliquer le lemme précédent pour conclure : il existe une section graphiquement cohérente $s$ telle que $s_{nm}(T_n)=T_m$ et un isomorphisme canonique de graphes $\sigma:\limind_s \Gamma_n\longrightarrow \Gamma$
\end{proof}
\begin{Cor}
Le prographe d'un amalgame de groupes finis relativement  à une base de filtre de sous-groupes distingués d'indices finis d'intersection réduite au neutre, est un prographe sylvestre. 
\end{Cor}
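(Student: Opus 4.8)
The plan is to realize the amalgam geometrically as a group acting on a tree, check that this action satisfies the hypotheses of the preceding theorem, and then feed that theorem a decreasing sequence extracted from the given filter base. Concretely, write $G = \ast_A G_i$ for the amalgam of the finite groups $G_i$ along $A$. By Bass--Serre theory (\cite{AST_1983__46__1_0}), $G$ is the fundamental group of the graph of groups carried by the \emph{star}: a central vertex labelled $A$, one leaf vertex labelled $G_i$ for each $i$, and an edge labelled $A$ joining the centre to the $i$-th leaf via the given injection $A \hookrightarrow G_i$. Since the underlying graph is itself a tree, the associated Bass--Serre tree $\Gamma$ carries an action of $G$ without inversion whose fundamental domain is exactly this finite star $T$, and the vertex stabilizers of $T$ are conjugates of $A$ and of the $G_i$, hence finite because all the factors are finite. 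Taking the index set countable (so that $G$, generated by countably many finite groups, is countable), the triple $(G,\Gamma,T)$ meets every standing hypothesis of the preceding theorem.

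Next I would replace the filter base by a decreasing sequence with trivial intersection. Let $\mathcal{B}$ denote the filter base of finite-index normal subgroups with $\bigcap_{H\in\mathcal{B}} H = \{e\}$. Enumerating $G \setminus \{e\}$ as $\{g_1,g_2,\ldots\}$ (countability of $G$), for each $g_k$ there is a member of $\mathcal{B}$ not containing $g_k$; the filter-base property then lets me place any finite collection of members below a common member of $\mathcal{B}$. Inductively this yields $H_0 \supset H_1 \supset \cdots$ in $\mathcal{B}$ with $g_k \notin H_n$ whenever $k \le n$, so that $\bigcap_n H_n = \{e\}$. Setting $\Gamma_n := H_n \backslash \Gamma$ and $T_n := \theta_n(T)$, I obtain precisely the data of a decreasing sequence of finite-index normal subgroups with trivial intersection demanded by the theorem, forming a cofinal subsystem of the prograph attached to $\mathcal{B}$.

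I would then apply the preceding theorem verbatim to $(G,\Gamma,T)$ and the sequence $(H_n)_n$. It produces a graphically coherent section $s = (s_{nm})_{n\le m}$ with $s_{nm}(T_n) = T_m$ together with a canonical isomorphism $\limind_s \Gamma_n \cong \Gamma$. As $\Gamma$ is a tree, this section is arborescent by definition, so the prograph $(\Gamma_n,\theta_{mn})_n$ is sylvestre; and because $(H_n)_n$ is cofinal, the inductive limit computed along it coincides with the one computed over all of $\mathcal{B}$, whence the prograph of the amalgam relative to the whole filter base is sylvestre, which is the assertion.

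The steps that require genuine care are, first, the geometric realization: one must confirm that the multi-factor amalgam $\ast_A G_i$ really is the fundamental group of the star graph of groups, so that its Bass--Serre tree admits a \emph{finite} tree as fundamental domain with \emph{finite} vertex stabilizers; and second, the passage from the general filter base to a countable decreasing sequence, where countability of $G$ is indispensable and where one must argue that cofinality makes the inductive limit along the sequence agree with the limit over $\mathcal{B}$. I expect this second point — the filter-base-to-sequence reduction and the cofinality argument — to be the main obstacle, since everything downstream is a direct citation of the preceding theorem rather than a fresh construction.
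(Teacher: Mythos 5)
Your route is the paper's own: Serre's Théorème 7 of \cite{AST_1983__46__1_0} realizes the amalgam acting on its Bass--Serre tree with the star as fundamental domain and finite vertex stabilizers (conjugates of the finite factors and of $A$), and then Théorème 2.2 delivers a graphically coherent section $s$ with $\limind_s \Gamma_n \simeq \Gamma$, a tree, whence the prograph is sylvestre; the paper's proof is literally these two citations. One caution on the realization step: the star is a \emph{finite} tree --- as the fundamental-domain hypothesis of Théorème 2.2 requires --- only when there are finitely many factors. Your parenthetical ``taking the index set countable'' would, for a countably infinite index set, produce an infinite star and the theorem would no longer apply. The paper's standing convention (Définition 3.4) is that ``amalgamé'' means finitely many factors; since each factor is finite, $G$ is then finitely generated, so the countability you need is automatic rather than an extra assumption.

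The genuine gap is your cofinality claim. Choosing $H_n \in \mathcal{B}$ with $g_k \notin H_n$ for $k \le n$ guarantees $\bigcap_n H_n = \{e\}$, but it in no way ensures that every member of $\mathcal{B}$ contains some $H_n$: trivial-intersection descending chains that are not cofinal are easy to produce, so the sentence ``because $(H_n)_n$ is cofinal, the inductive limit computed along it coincides with the one computed over all of $\mathcal{B}$'' does not follow from your construction, and with it falls the transfer of sylvestre-ness from the extracted sequence back to the full base. The correct repair uses finite generation: a finitely generated group has only finitely many subgroups of each given finite index, hence $\mathcal{B}$ has only finitely many members of index $\le n$; the filter-base property provides $H_n \in \mathcal{B}$ inside their intersection, and this chain \emph{is} cofinal, with $\bigcap_n H_n \subset \bigcap_{H \in \mathcal{B}} H = \{e\}$. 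Even then, a section of the prograph indexed by all of $\mathcal{B}$ must be defined for every comparable pair in $\mathcal{B}$, not merely along the chain --- the interpolation that Lemme 2.1 carries out for the intermediate integer indices, which is not automatic over a general directed set. The paper sidesteps all of this by tacitly reading ``base de filtre'' as a decreasing sequence $(H_n)_n$, exactly the setting of Théorème 2.2; under that reading your entire reduction step is unnecessary and your argument collapses to the paper's two-line proof.
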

\begin{proof}[Démonstration]
D'après le théorème $7$ de \cite{AST_1983__46__1_0}, notre groupe amalgamé agit sur un arbre avec comme domaine fondamental un arbre fini. On lui applique alors le théorème précédent pour obtenir le résultat voulu.
\end{proof}
\vskip 2mm
\noindent
Nous allons maintenant aborder la notion de système inductif extrait d'un système projectif et de section pour un système projectif de groupes (cf section 2 de \cite{DesSua}).
\vskip 2mm
On considère un système projectif $(G_i,\varphi_{ji})_{i\in I}$ d'ensembles indexé par un ensemble d'indices $I$ filtrant à droite. On considère pour tout $i\in I$, un sous-ensemble $E_i\subset G_i$ et pour tout $i\leq j$ une section ensembliste $r_{ij}$ de $\varphi_{ji}$. Si le système $(E_i,r_{ij})_i$ est inductif, on dira que c'est un système inductif extrait du système projectif $(G_i,\varphi_{ji})_{i\in I}$. Deschamps et  Suarez montrent que dans ce cas, l'ensemble $\limind_r E_i$ s'injecte canoniquement dans l'ensemble $\limproj_\varphi G_i$.
\vskip 2mm
On s'intéresse maintenant à un système projectif de groupes 
 $(G_i,\varphi_{ji})_{i\in I}$ et on appellera section de ce système tout système inductif extrait $(G_i,r_{ij})_i$ tel que l'image de $\limind_r G_i$ dans $\limproj_\varphi G_i$ soit un sous-groupe.
\begin{Prop}[ \cite{DesSua}, proposition 13 ]
 Soient $(G_i,\varphi_{ji})_{i\in I}$ un système projectif de groupes finis et $G=\limproj_\varphi G_i$. Si $r$ désigne une section de $(G_i,\varphi_{ji})_{i\in I}$ alors l'image $H$ de $\limind_r G_i$ dans $G$ est un sous-groupe dense de $G$. En particulier, $G$ s'identifie à la complétion profinie de $H$ relativement à la base de filtre de sous-groupes distingués d'indices finis $\{H\cap ker(G\rightarrow G_i)\}_{i\in I}$.
\vskip 2mm
\noindent
 Réciproquement, soient $H$ un groupe dénombrable et $H_0\supset H_1\supset \cdots$ une suite décroissante de sous-groupes distingués d'indices finis et d'intersection réduite au neutre. Posons, pour tout entier $n$, $G_n=H/H_n$ et considérons $G=\limproj G_n$. Il existe alors une section $r$ du système projectif $(G_n,\varphi_{mn})_n$ telle que $H\simeq \limind_r G_n$. Plus précisément, si l'on considère les injections canoniques
 $$
\begin{tikzpicture}[baseline= (a).base]
\node[scale=1] (a) at (0,0){
\begin{tikzcd}[column sep=4em,row sep=4em]
H \arrow[r,hook, "\pi"] \arrow[rr,dashrightarrow, bend right, "\omega"]&G &\limind_r G_n \arrow[l, hook, "\theta"]
\end{tikzcd}
 };
\end{tikzpicture}
 $$ 
alors il existe une bijection $\omega: H\longrightarrow \limind_r G_n$ telle que $\pi=\theta\circ \omega$ (en d'autres termes, on peut reconstruire par une certaine section, une copie de $H$ dans sa complétion profinie)
\end{Prop}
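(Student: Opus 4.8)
For the direct implication the plan is to prove density of $H=\theta(\limind_r G_i)$ by showing its projection to each finite quotient is onto. The key is the explicit description of the canonical injection $\theta$ recalled just above: for $g\in G_i$ the element $\theta(\overline g)\in\limproj_\varphi G_i$ has $i$-th coordinate equal to $r_{ii}(g)=g$, since the inductive structure forces $r_{ii}=\mathrm{id}$ ($r_{ii}$ being a section of $\varphi_{ii}=\mathrm{id}$). Hence $\varphi_i(H)\supseteq G_i$, i.e. $\varphi_i(H)=G_i$ for every $i$. As the $G_i$ are finite, the kernels $N_i=\ker\varphi_i$ form a base of open neighbourhoods of the identity, and surjectivity onto every $G_i$ says exactly that $HN_i=G$ for all $i$, which is density. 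For the "in particular" clause I would apply the first isomorphism theorem to $\varphi_i|_H:H\to G_i$, whose kernel is $H\cap N_i$, to get $H/(H\cap N_i)\cong G_i$ compatibly with the transition maps; passing to the projective limit yields the profinite completion $\widehat H\cong\limproj_\varphi G_i=G$.

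The converse is the substantial part, and the whole difficulty is to manufacture a section whose inductive limit is a faithful copy of $H$. The plan is to build all the sections from a single well-chosen family of coset transversals. Fix an enumeration $H=\{h_0,h_1,\dots\}$ and let $p_n:H\to G_n=H/H_n$ be the canonical surjection. For each $n$ define $t_n:G_n\to H$ by sending a coset to its least-index element for this enumeration, and set $r_{nm}=p_m\circ t_n:G_n\to G_m$ for $n\le m$. Since $t_n$ is a set-section of $p_n$ and $\varphi_{mn}\circ p_m=p_n$, each $r_{nm}$ is a section of $\varphi_{mn}$. The point of the least-index choice is that it renders the system inductive for free: if $h_i=t_n(g)$ is the least-index representative of its $H_n$-coset, then, its $H_m$-coset being contained in that $H_n$-coset, $h_i$ is also least-index there, so $t_m(p_m(t_n(g)))=t_n(g)$ and therefore $r_{mk}\circ r_{nm}=r_{nk}$, while $r_{nn}=\mathrm{id}$.

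Running the description of $\theta$ once more, I would check that $\theta(\overline g)=\pi(t_n(g))$ for $g\in G_n$ (all coordinates of $\theta(\overline g)$ equal $p_m(t_n(g))$), so that the image of $\limind_r G_n$ in $G$ is $\pi\!\left(\bigcup_n\mathrm{Im}(t_n)\right)$, a subgroup, which already certifies that $r$ is a section. It then remains to see that $\bigcup_n\mathrm{Im}(t_n)=H$, and this is exactly where $\bigcap_n H_n=\{e\}$ enters: given $h_j$, only the finitely many $h_0,\dots,h_{j-1}$ could keep $h_j$ from being least-index in its coset, and each $h_i\neq h_j$ with $i<j$ satisfies $h_i\not\equiv h_j\pmod{H_n}$ once $n$ is large, by triviality of the intersection; choosing $n$ past all these finitely many thresholds makes $h_j$ the unique minimal-index element of its $H_n$-coset, so $h_j\in\mathrm{Im}(t_n)$. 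Thus the image is all of $\pi(H)$, $\theta$ corestricts to an isomorphism $\limind_r G_n\xrightarrow{\ \sim\ }\pi(H)$, and composing with $\pi$ gives the desired bijection $\omega=\theta^{-1}\circ\pi:H\to\limind_r G_n$, satisfying $\pi=\theta\circ\omega$ by construction.

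I expect the only genuine obstacle to be this converse, and more precisely the simultaneous demand that the transversals be coherent (so that $(G_n,r_{nm})$ is truly an inductive system and $\theta$ is available) and exhausting (so that the limit recovers all of $H$ and not merely a proper dense subgroup). The least-index representative recipe is what reconciles the two constraints at once: coherence is automatic from the nesting of cosets, and exhaustion follows from $\bigcap_n H_n=\{e\}$.
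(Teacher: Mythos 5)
Your proof is correct, but your converse takes a genuinely different route from the paper's. The paper (following Deschamps--Suarez) builds the section by a diagonal extraction: it enumerates $H$, sets $\Delta_k=\{h_1,\dots,h_k\}$, and produces two interleaved sequences of indices $(i_k)$ and $(n_k)$ so that $\Delta_{i_{k+1}}$ injects into $G_{n_{k+1}}$ and surjects onto $G_{n_k}$, defining $r_{n_kn_{k+1}}$ so that $\Delta_{i_k}$ lifts identically; the maps $r_{nm}$ for intermediate indices are then filled in, $\omega(h)=\overline{\pi_{n_k}(h)}$ is defined piecewise, and $\theta\circ\omega=\pi$ is verified by an explicit coordinate computation of $\theta$. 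You instead define, for every $n$ at once, the least-index transversal $t_n:G_n\to H$ and set $r_{nm}=p_m\circ t_n$; the nesting of cosets makes coherence ($r_{mk}\circ r_{nm}=r_{nk}$) automatic, and $\bigcap_n H_n=\{e\}$ gives exhaustion ($\bigcup_n\mathrm{Im}(t_n)=H$) exactly as you argue, so $\theta(\overline g)=\pi(t_n(g))$ and $\omega=\theta^{-1}\circ\pi$ come out immediately, with no subsequences and no case analysis. One small ordering slip: when you say the image $\pi\bigl(\bigcup_n\mathrm{Im}(t_n)\bigr)$ ``already certifies that $r$ is a section,'' the subgroup property is not yet known at that point --- it only follows once exhaustion shows the union is all of $H$; your next sentence supplies this, so the argument stands, but the certification should come after, not before. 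What each approach buys: yours is shorter and more canonical, with the two constraints (coherence and exhaustion) reconciled by a single choice; the paper's $\Delta_k$-machinery is heavier but is deliberately retained (the author includes this proof ``car nous allons l'utiliser dans la suite'') because the identical scaffolding --- the finite sets $\Delta_k$, the lifts $\rho_{n_k}$, and the inject/surject diagram --- is reused verbatim in the proof of Théorème 3.7, where the section $r$ must additionally be intertwined with the graph section $s$ and with stabilizer conditions on the fundamental domain; your least-index section proves this proposition in isolation but would not hand the later proof that ready-made diagram.
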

Nous choisissons d'inclure la démonstration de Deschamps et Suarez de cette proposition car nous allons l'utiliser dans la suite.
\begin{proof}[Démonstration]
On garde les notations de la preuve de la proposition 10.  Il est clair que les applications $\theta_k$ sont surjective et comme $\theta_k=\varphi_k\circ \theta$ on en déduit que $\varphi_k(\theta(H))=G_k$ pour tout $k$, ce qui montre que $\theta(H)$ est un sous-groupe dense de $G$. Si maintenant, pour $i\in I$ fixé, on note $H_i=H\cap ker(\varphi_i)$, on a alors $G_i=H/H_i$ et donc $G=\limproj H/H_i$ est bien la complétion profinie du groupe $H$ relativement à la base de filtre $\{H_i\}_i$.
\vskip 2mm
\noindent
Réciproquement, si l'on suppose que $H$ est dénombrable, on peut donc énumérer $H$ en une suite $h_1,h_2,\cdots$. Pour tout entier $k\geq 1$, on pose $\Delta_k=\{h_1,\cdots ,h_k\}$. On prend les notations suivantes
$$
\begin{tikzpicture}[baseline= (a).base]
\node[scale=1] (a) at (0,0){
\begin{tikzcd}[column sep=4em,row sep=4em]
\limind_r G_n \arrow[d, "\theta"] \arrow[rd, twoheadrightarrow, "\theta_n"]\\
G \arrow[r,twoheadrightarrow, "\phi_n"]& G_n\\
H \arrow[u,"\pi"] \arrow[ur,twoheadrightarrow, "\pi_n"]
\end{tikzcd}
 };
\end{tikzpicture}
 $$ 
Exactement comme dans la preuve du lemme 2.5 et du théorème 2.6 (cas des sommets), on construit à partir de la suite de parties $(\Delta_k)_k$ un système inductif extrait $r=(r_{nm})_{n\leq m}$, en particulier, on dispose de deux suites $(i_k)_k$ et $(n_k)_k$ strictement croissantes telles que, par le diagramme suivant
$$
\begin{tikzpicture}[baseline= (a).base]
\node[scale=1.5] (a) at (0,0){
\begin{tikzcd}[column sep=4em,row sep=4em]
\Delta_{i_{k+1}} \arrow[r, hook, "\pi_{n_{k+1}}" ] \arrow[dr, twoheadrightarrow, "\pi_{n_k}" ] & G_{n_{k+1}} \arrow[d,twoheadrightarrow,]\\ 
\Delta_{i_k} \arrow[u,hook] \arrow[r,hook, "\pi_{n_k}"] & G_{n_{k}}  \arrow[ul,dashrightarrow, shift left, "\rho_{n_k}" ]\arrow[u, bend right,dashrightarrow, swap, "r_{n_k n_{k+1}}"]
 \end{tikzcd}
};
\end{tikzpicture} 
$$
la partie $\Delta_{i_k}$ se relève identiquement par $r_{n_kn_{k+1}}$ dans $\Delta_{i_{k+1}}$. Définissons l'application $\omega :H\longrightarrow \limind_r G_n$ de la manière suivante : si $h\in H$ il existe un indice $i_k$ tel que $h\in \Delta_{i_k}$ et on pose
$$\omega(h)=\overline{\pi_{n_k}(h)}$$
La manière dont on a construit $r$ assure que la définition de $\omega(h)$ ne dépend pas du choix de l'indice $i_k$ et donc que $\omega$ est bien définie. L'application $\omega$ est surjective : soit $x=\overline{x_l}\in \limind_r G_n$, et un indice $k$ tel que $n_k\geq l$. L'ensemble $\Delta_{i_{k+1}}$ se surjecte sur $G_l$, on a alors $x=\overline{r_{ln_{k+1}}(x_l)}$ et comme $G_l$ se relève injectivement par $r_{ln_{k+1}}$ dans $\pi_{n_{k+1}}(\Delta_{i_{k+1}})$, il existe (un unique) $h\in \Delta_{i_{k+1}}$ tel que $\pi_{n_{k+1}}(h)=r_{ln_{k+1}}(x_l)$. On a donc $\omega(h)=x$. 
\vskip 2mm
\noindent
Montrons maintenant que $\theta\circ \omega=\pi$. Remarquons pour commencer que si $x\in \limind_r G_n$ et que $x=\overline{x_n}$ pour un certain entier $n$, alors on a 
$$\theta(x)=(\varphi_{n,1}(x_n),\cdots ,\varphi_{n,n-1}(x_n),x_n,r_{n,n+1}(x_n),\cdots)\in \limproj G_n\subset \prod G_n$$
Prenons maintenant $h\in H$ et considérons un indice $i_k$ tel que $h\in \Delta_{i_k}$, on a alors
$$\begin{array}{lll}
\theta\circ \omega(h)&=&\theta (\overline{\pi_{n_k}(h)})\\
&=&(\varphi_{n_k,1}(\pi_{n_k}(h)),\cdots ,\varphi_{n_k,n_k-1}(\pi_{n_k}(h)),\pi_{n_k}(h),r_{n_k,n_k+1}(\pi_{n_k}(h)),\cdots)\\
&=&(\pi_1(h),\cdots ,\pi_{n_k-1}(h),\pi_{n_k}(h),r_{n_k,n_k+1}(\pi_{n_k}(h)),\cdots)\\
\end{array}$$
Maintenant, par construction, pour tout $x\in \Delta_{i_k}$ et tout $l\geq n_k$ on a $r_{n_k,l}(\pi_{n_k}(x))=\pi_l(x)$ et donc
$$\theta\circ \omega(h)=(\pi_1(h),\cdots ,\pi_{n_k-1}(h),\pi_{n_k}(h),\pi_{n_k+1}(h),\cdots)=\pi(h)$$
L'application $\pi$ étant injective, $\omega$ l'est aussi ce qui prouve finalement qu'elle est bien bijective.
\end{proof}

\section{Groupes profinis presqu'amalgamés}
Nous commençons cette section par une définition d'action profinie donnée par Deschamps et Suarez (\cite{DesSua} définition $15$ du $2.2.$). On verra ensuite qu'une telle action induit une action d'un sous groupe dense du groupe profini sur un graphe limite inductive relativement à une section graphiquement cohérente et une section pour le système projectif de groupe.
Nous finirons en abordant la notion de groupe profini presqu'amalgamé et démontrons les deux théorèmes principaux.
\begin{Def}[Deschamps et Suarez]
Soit ${\cal P}=(\Gamma_i, \theta_{ji})_{i\in I}$ un prographe et $G$ un groupe profini. On dit que $G$ agit sur ${\cal P}$ si :
\vskip 2mm
\noindent
$\bullet$ Il existe une base de filtre, $(U_i)_{i\in I}$, de sous-groupes ouverts distingués de $G$, indexée par $(I,\leq)$ ($U_j\subset U_i\Longleftrightarrow i\leq j$) tel que $\bigcap_i U_i=\{e\}$.
\vskip 2mm
\noindent
$\bullet$ Pour tout $i\in I$, le groupe quotient $G_i=G/U_i$ agit sur le graphe $\Gamma_i$.
\vskip 2mm
\noindent
$\bullet$ Les actions des groupes $G_i$ sur les graphes $\Gamma_i$ sont compatibles pour l'ordre : pour tout $i\leq j$, $g\in G_j$ et $x\in \Gamma_j$, on a $\theta_{ji}(g \cdot x)=\varphi_{ji}(g) \cdot \theta_{ji}(x)$ o\`u $\varphi_{ji}:G_j\longrightarrow G_i$ est la surjection canonique induite par l'inclusion $U_j\subset U_i$.
\vskip 2mm
\noindent
 Si l'on dispose d'une section graphiquement cohérente $s$ de $(\Gamma_i, \theta_{ji})_{i\in I}$ et d'une section $r$ du système projectif $(G/U_i,\varphi_{ji})_{i\in I}$ on dira que l'action de $G$ sur $\cal P$ est complètement compatible à $s$ et $r$ si l'on a, en outre, la propriété suivante : 
$$(C_3)\ \forall i\in I,\ \forall (g_i,a_i)\in G_i\times \Gamma_i,\ \exists i_0\geq i,\ \forall k\geq i_0,\ s_{i_0k}\left( r_{ii_0}(g_i) \cdot {s_{ii_0}(a_i)} \right)=r_{ik}(g_i) \cdot s_{ik}(a_i)$$
\end{Def}
\vskip 2mm
\noindent
 On reprend les notations de la définition précédente et l'on suppose que l'action de $G$ sur $\cal P$ est complètement compatible à $s$ et $r$. Considérons un indice $i\in I$ et $(a_i,g_i)\in \Gamma_i\times G_i$. Soit $i_0$ un indice satisfaisant à la condition $(C_3)$. On pose :
$$g \cdot a=\overline{r_{ii_0}(g_i) \cdot s_{ii_0}(a_i)}$$
Ceci ne dépend que de $a$ et $g$.
\vskip 2mm
\noindent
 Dans la proposition $16$ de \cite{DesSua}, Deschamps et Suarez montrent que l'application $(a,g)\longmapsto  g \cdot a$ définie précédemment est une action du groupe $\limind_r G_i$ sur le graphe $\limind_s \Gamma_i$.

\begin{Def} On dira qu'un groupe profini agit profiniement sur un prographe $\mathcal{P}$ si $G$ agit de manière complètement compatible à deux sections $s$ et $r$. Si de plus, pour tout $i\in I$, $G_i$ agit sur $\Gamma_i$ avec comme domaine fondamental un arbre fini $T_i$, avec $s_{ij}(T_i)=T_j$ et la restriction  $s_{ij}|_{T_i}$ est un morphisme de graphe, et enfin  la condition :
$$(C_4) \forall i \in I, \forall \alpha_i \in T_i,\forall g_i \in G_i, \exists i_0 \geq i, \forall k \geq i_0, \ s_{ik}(g_i \cdot \alpha_i)=r_{ik}(g_i) \cdot \alpha_k$$
 est vérifiée, on dira que $G$ agit \textbf{à domaine fondamental arbre fini} (dfaf) sur le prographe $\mathcal{P}$.
\end{Def}
\begin{Lem} Soit $ G$ un groupe profini agissant à domaine fondamental arbre fini sur un proarbre $\mathcal{P} $, alors $\limind_r G_i$ agit sur $\limind_s \Gamma_i$ avec comme domaine fondamental un arbre fini $T:=\overline{T_i}$, pour tout $i \in I$.
\end{Lem}
\begin{proof}[Démonstration]Soit $a \in \limind_s \Gamma_i$, $a=\overline{a_i}$ où $a_i \in \Gamma_i$. Comme $G$ agit à domaine fondamental arbre fini sur $\mathcal{P}$ il existe donc $g_i\in G_i$ et il existe $\alpha_i \in T_i$, tels que $a_i=g_i \cdot \alpha_i$. On cherche $g \in \limind_r G_i$  et $\alpha \in T$ tel que $a=g \cdot \alpha$. Les éléments $g:=\overline{g_i}$ et $\alpha = \overline{\alpha_i}$ conviennent. En effet :
$$\begin{array}{lll}
\overline{g_i} \cdot \overline{\alpha_i}&=&\overline{r_{ii_0}(g_i)\cdot s_{ii_0}(\alpha_i)} \\
&=&\overline{r_{ii_0}(g_i).\alpha_{i_0}}\\
&=& \overline{s_{ii_0}(g_i \cdot \alpha_i)}\\
&=& \overline{s_{ii_0}(a_i)} \\
&=& \overline{a_i}\\
&=& a
\end{array}$$
T est bien un domaine fondamental pour l'action de $\limind_r G_i$ sur $\limind_s \Gamma_i$ et c'est aussi un arbre fini.
\end{proof}
\vskip 2mm
\noindent
Dans la  suite on s'intéressera qu'aux groupes amalgamés avec un nombre  fini de facteurs. Nous introduisons ainsi la convention/définition suivante :
\begin{Def}~\\
\begin{enumerate}
\item Un groupe G est dit amalgamé s'il est amalgame d'un nombre fini de facteurs.
\item Un groupe profini sera dit presque-amalgamé s'il est égal à la complétion profinie d'un groupe amalgamé relativement à une base de filtre de sous-groupes distingués d'indice fini $\mathcal{ B}$, vérifiant $\displaystyle \bigcap_{H\in {\mathcal{ B}}}H=\{e\}$.
\end{enumerate}
\end{Def}
L'hypothèse de séparation $\displaystyle \bigcap_{H\in {\mathcal{ B}}}H=\{e\}$ nous assure que le groupe amalgamé s'injecte de manière dense dans sa complétion. 
\vskip 2mm 
\noindent
Comme pour le cas de la presque liberté (cf lemme $2$ de \cite{DesSua}), on a de manière plus précise le lemme suivant :
\vskip 2mm
\noindent
\begin{Lem} Un groupe profini est presqu'amalgamé si et seulement s'il possède un sous-groupe (abstrait) amalgamé dense.
\end{Lem}
\begin{Def} Un groupe profini est dit \textbf{finiement} presqu'amalgamé s'il possède un sous groupe dense, amalgame de groupes finis.
\end{Def}
\vskip 2mm 
\noindent
Nous obtenons un analogue du théorème principal de \cite{DesSua} donné par les deux prochains théorèmes.
\begin{Th}Soit $\Omega$ un groupe profini de rang dénombrable qui agit à domaine fondamental arbre fini sur un proarbre $\mathcal{P}$. Alors $\Omega$ est presqu'amalgamé.
\end{Th}
\begin{proof}[Démonstration] Par hypothèse $\Omega= \limproj_i G_i$ agit à domaine fondamental segment sur un proarbre $\mathcal{P}=(\Gamma_i, \theta_{ji})_i$ relativement à deux sections $s$ et $r$. Le sous-groupe $\limind_r G_i$ dense agit d'après le lemme $3.3$ sur l'arbre $\limind_s \Gamma_i$ avec comme domaine fondamental un arbre fini $T$. Il est donc amalgamé d'après $\cite{AST_1983__46__1_0}$. Ainsi $\Omega $ possède un sous groupe abstrait dense qui est amalgamé, il est donc presqu'amalgamé.
\end{proof}
\noindent
Nous allons maintenant démontrer une réciproque partielle du théorème précédent. Nous aurons besoin du lemme technique suivant.
\begin{Lem} Soit H un groupe dénombrable agissant sur un arbre $\Gamma$ avec comme domaine fondamental un arbre fini $T$. On suppose que les stabilisateurs des sommets de $T$ sont finis. On considère une suite décroissante $H_0 \supset H_1 \supset \cdots$ de sous-groupes distingués d'indice fini de $H$ telle que $\bigcap \limits _{n \in \mathbb{N}} H_n= \{e\}$ et le prographe $(\Gamma_n, \theta_{mn})_n$ donné  par $ \Gamma_n=H_n \setminus \Gamma$. Posons pour tout entier $n$  $G_n= H/H_n$. Pour tout $n \in \mathbb{N}$, on note $\pi_n$ l'épimorphisme canonique de $H$ dans $G_n$ et on suppose que $G_n$ agit $\Gamma_n$ avec comme domaine fondamental l'arbre fini $T_i:=\overline{T}$. Considérons enfin $\Lambda \subset H$ une partie finie de $H$. Il existe alors deux indices :
\begin{itemize}
\item $n \geq 0$ tel que $\Lambda$ s'injecte par $\pi_n$ dans $G_n$ ;
\item $m \geq 0 $ tel que  si $\lambda \in \Lambda$ et $\alpha \in T$ et $ \lambda \notin \Stab(\alpha)$ alors $\overline{\lambda} \notin \Stab(\overline{\alpha})$, où $\overline{\alpha}=\alpha_{m} \in T_m$.
\end{itemize}
\end{Lem}
\begin{proof}[Démonstration]
Nous allons montrer qu'il existe un indice $m \in \mathbb{N}$,  tel que si $\lambda \in \Lambda$, $\alpha \in T $ et $\lambda \notin \Stab(\alpha)$ alors $\overline{\lambda} \notin \Stab(\overline{\alpha})$, où $\overline{\alpha}=\alpha_{m}$.\\
Soit $\lambda \in \Lambda$ avec $ \lambda \notin \Stab(\alpha)$,  on suppose que pour tout $m \in \mathbb{N}$, $\overline{\lambda} \in \Stab(\alpha_m)$. Dans ce cas on a :
$$\begin{array}{lll}
\overline{\lambda} \cdot \alpha_m &=&  \alpha_m\\
\overline{\lambda} \cdot \overline{\alpha} & = &\overline{\alpha}\\
\overline{\lambda \cdot \alpha}&=&\overline{\alpha}
\end{array}$$
Ce qui implique l'existence d'un $h_m \in H_m$ tel que $\lambda\cdot \alpha= h_m \cdot \alpha$, on a donc que $h^{-1}_m \lambda\in \Stab(\alpha)$ ce qui veut dire qu'il existe un élément $a_m \in \Stab(\alpha)$ tel que $h_m^{-1} \lambda=a_m$. Comme $\Stab(\alpha)$ est fini, il existe une infinité d'indices $m_k$ telle  que $a_{m_k}=a \in \Stab(\alpha)$. On a donc $h_{m_k}^{-1}= \lambda^{-1}a \neq e$ d'après l'hypothèse $\lambda \notin \Stab(\alpha)$. Et le fait que $h_{m_k} \neq e$ contredit l'hypothèse $\bigcap \limits _{n \in \mathbb{N}} H_n= \{e\}$.
\vskip 2mm
\noindent
Pour la première affirmation, on raisonne de la même manière.
\end{proof}
\begin{Th}
Soit $\Omega$ un groupe profini de rang dénombrable, finiement presqu'amalgamé. Alors $\Omega$ agit à domaine fondamental arbre fini sur un proarbre. 
\end{Th}
\begin{proof}[Démonstration]
 Comme $\Omega$ est de rang dénombrable, il existe une filtration $U_0\supset U_1\supset \cdots $ de sous-groupes ouverts distingués de $\Omega$. Si l'on considère les groupes quotients $G_n=\Omega/U_n$, alors $\Omega=\limproj_n G_n$. Soit $F$ un sous-groupe amalgamé et dense de $\Omega$. D'après le théorème $7$  de \cite{AST_1983__46__1_0}, $F$ agit sur un arbre $\Gamma$ avec comme domaine fondamental un arbre fini $T$. On considère le prographe $\mathcal{P}=(\Gamma_n)_n$ de $F$ (relativement à la base de filtre $\left\{ F\cap U_n\right\}_n$), donné par $\Gamma_n=F\cap U_n \setminus \Gamma$.
 \vskip 2mm
\noindent
Puisque $F$ agit sur $\Gamma$ avec comme domaine fondamental un arbre fini, on en déduit que pour tout entier $n$, $G_n= F/ F\cap U_n$ agit sur $ \Gamma_n$ avec comme domaine fondamental un arbre fini $T_n$.
\vskip 2mm
\noindent
Le groupe $F$ est dénombrable et $\left\{ F\cap U_n\right\}_n$ est une base de filtre de sous-groupes distingués d'indices fini et d'intersection réduite au neutre. Nous allons construire une section $r$ du système projectif $(G_n, \phi_{mn})_n$, tel que $F \simeq \limind_r G_n$.\\
Soit $\Lambda \subset F$ une partie finie de $F$. Il existe, d'après le lemme précédent deux indices :
\begin{itemize}
\item $n \geq 0$ tel que $\Lambda$ s'injecte par $\pi_n$ dans $G_n$ ;
\item $n' \geq 0 $ tel que si $\lambda \in \Lambda$, $\alpha \in T$ et $ \lambda \notin \Stab(\alpha)$ alors $\overline{\lambda} \notin \Stab(\overline{\alpha})$, où $\overline{\alpha}=\alpha_{n'}$.
\end{itemize}
Puisque $F$ est dénombrable, pour $n \geq 0$, on considère $\Lambda_n=\{f_1,\cdots,f_n\}$, où $f_i \in F$ pour $i=1,\cdots,n$. La suite $(\Lambda_n)_n$ est une suite croissante de parties de $F$ dont la réunion vaut $F$, $\cup_n \Lambda_n=F$.
\vskip 2mm
\noindent
Posons $i_0=0$, $\Lambda_{i_0}=\{ e \}$. Par ce qui précède il existe un indice $n_0$ tel que si $\lambda \in \Lambda$, $ \alpha \in T$ et $ \lambda \notin \Stab(\alpha)$ alors $\overline{\lambda} \notin \Stab(\overline{\alpha})$ et on a que $\Lambda_{i_0}$ s'injecte par $\pi_{n_0}$ dans $G_{n_0}$. Comme $G_{n_0}$ est fini, si l'on relève ses éléments dans $F$, il existe un  indice $i_1 > i_0$  tel que $\Lambda_{i_1}$ contienne tous ces relevés ; la partie $\Lambda_{i_1}$ se surjecte par $\pi_{n_0}$ dans $G_{n_0}$. Ensuite il existe deux indices :
\begin{itemize}
\item $n'_{1} >n_0$ tel que $\Lambda_{i_1}$ s'injecte par $\pi_{n'_1}$ dans $G_{n'_1}$ ;
\item $n''_1 \geq 0 $ tel que si $\lambda \in \Lambda$, $\alpha \in T$  et $ \lambda \notin \Stab(\alpha)$ alors $\overline{\lambda} \notin \Stab(\alpha_{n''_1})$.
\end{itemize}
On prend $n_1=\max(n'_1,n''_1)$. Alors les deux conditions précédentes soient vérifiées pour l'indice $n_1$. On construit ainsi par récurrence deux suites d'indices $i_0 <i_1< \cdots $ et $n_0<n_1< \cdots $ telles que :
\begin{enumerate}[label=\roman*)]
\item $\Lambda_{i_k}$ s'injecte par $\pi_{n_k}$ dans $G_{n_k}$ ;
\item $\Lambda_{i_k}$ se surjecte par $\pi_{n_{k-1}}$ dans $G_{n_{k-1}}$ ;
\item Si $\lambda \in \Lambda_{i_k}$, $\alpha \in T$ et $\lambda \notin \Stab(\alpha)$ alors $\overline{\lambda} \notin \Stab(\alpha_{n_k})$. 
\end{enumerate}
Comme dans la preuve du lemme 2.1 on construit un système inductif $r=(r_{nm})_{n \leq m}$. En particulier on obtient le diagramme suivant :
$$
\begin{tikzpicture}[baseline= (a).base]
\node[scale=1.5] (a) at (0,0){
\begin{tikzcd}[column sep=4em,row sep=4em]
 \Lambda_{k+1} \arrow[r, hook, "\pi_{n_{k+1}}" ] \arrow[dr, twoheadrightarrow, "\pi_{n_k}" ] & G_{n_{k+1}} \arrow[d,twoheadrightarrow,]\\ 
\Lambda_k \arrow[u,hook] \arrow[r,hook, "\pi_{n_k}"] & G_{n_{k}}  \arrow[ul,dashrightarrow, shift left, "\rho_{n_k}" ]\arrow[u, bend right,dashrightarrow, swap, "r_{n_k n_{k+1}}"]
 \end{tikzcd}
};
\end{tikzpicture} 
$$
$r$ est bien une section du système projectif $(G_n, \phi_{mn})_n$, plus précisément, d'après la proposition 2.4 si l'on considère les injections canoniques
 $$
\begin{tikzpicture}[baseline= (a).base]
\node[scale=1] (a) at (0,0){
\begin{tikzcd}[column sep=4em,row sep=4em]
H \arrow[r,hook, "\pi"] \arrow[rr,dashrightarrow, bend right, swap, "\omega"]&G &\limind_r G_n \arrow[l, hook', swap, "\theta"]
\end{tikzcd}
 };
\end{tikzpicture}
 $$ 
alors il existe une bijection $\omega: H\longrightarrow \limind_r G_n$ telle que $\pi=\theta\circ \omega$.\\
Construisons une section graphiquement cohérente $s$ du prographe $\mathcal{P}$. Posons $\Delta_k= \Lambda_k \cdot T$, pour tout $k\in \mathbb{N}$. On a aussi $\Gamma_{n_k}=G_{n_k}\cdot T_{n_k}$. On obtient alors le diagramme suivant :
$$
\begin{tikzpicture}[baseline= (a).base]
\node[scale=1.5] (a) at (0,0){
\begin{tikzcd}[column sep=4em,row sep=4em]
 \Delta_{k+1} \arrow[r, hook, "\theta_{n_{k+1}}" ] \arrow[dr, twoheadrightarrow, "\theta_{n_k}" ] & \Gamma_{n_{k+1}} \arrow[d,twoheadrightarrow,]\\ 
\Delta_k \arrow[u,hook] \arrow[r,hook, "\theta_{n_k}"] & \Gamma_{n_{k}} \arrow[ul,dashrightarrow, shift left, "\sigma_{n_k}"]\arrow[u, bend right,dashrightarrow, swap, "s_{n_k n_{k+1}}"]
 \end{tikzcd}
 };
\end{tikzpicture}
 $$
où $\sigma_{n_k}(g_{n_k} \cdot \alpha_{n_k})=\rho_{n_k}(g_{n_k}) \cdot \alpha$ et $\theta_{n_k}(h_k \cdot \alpha)=\pi_{n_k}(h_k) \cdot \alpha_{n_k}$, pour tout $k \in \mathbb{N}$.
\\
Ces deux applications sont bien définies grâce à la condition $iii)$.
\vskip 2mm
\noindent
Posons :
$$\begin{array}{ccc}
s_{n_kn_{k+1}}(g_{n_k} \cdot \alpha_{n_k})&=&\theta_{n_{k+1}} \circ \sigma_{n_k}(g_{n_k} \cdot \alpha_{n_k}) \\
&=&\theta_{n_{k+1}}(\rho_{n_k}(g_{n_k}) \cdot \alpha) \\
&=& \pi_{n_{k+1}}(\rho_{n_k}(g_{n_k})) \cdot \alpha_{n_{k+1}}\\
&=& r_{n_kn_{k+1}}(g_{n_k})\cdot \alpha_{n_{k+1}} 
\end{array}$$
et $s$ aussi est bien définie. D'après la preuve du lemme 2.1 $s$ est une section graphiquement cohérente du prographe $\mathcal{P}$, on a que $ \limind_s\Gamma_n \simeq \Gamma$ et $s_{nm}(\alpha)=\alpha_m$, avec $\alpha_i \in T_i$, la restriction de $s_{nm}$ à $T_n$ est un morphisme de graphe. L'égalité $s_{n_kn_{k+1}}(g_{n_k} \cdot \alpha_{n_k})= r_{n_kn_{k+1}}(g_{n_k})\cdot \alpha_{n_{k+1}} $ correspond à la condition $(C_4)$.\\
Nous allons donc montrer que la condition ($C_3$) de la définition 15 est vérifiée. Dire que $r$ est une section équivaut à la condition $(C_2)$ suivante : $\forall n \in \mathbb{N}, \forall g_n,g'_n,$ $\exists n_0 \geq n,$ $ \forall m \geq n_0, $
$$ r_{n_0m}(r_{nn_0}(g_n)\cdot r_{nn_0}(g'_n))=r_{nm}(g_n)r_{nm}(g'_n)$$
On agit sur $\alpha_m$ des deux côtés de l'égalité et d'après l'égalité  $ s_{n_kn_{k+1}}(g_{n_k} \cdot \alpha_{n_k}) = r_{n_kn_{k+1}}(g_{n_k})\cdot \alpha_{n_{k+1}}$, on obtient :
$$s_{n_0m}(r_{nn_0}(g_n)r_{nn_0}(g'_n)\cdot \alpha_n)=r_{nm}(g_n) \cdot s_{n_0m}(g'_n\cdot \alpha_n)$$
On a bien la condition $(C_3)$: 
$$s_{n_0m}(r_{nn_0}(g_n) \cdot s_{nn_0}(g'_n \cdot \alpha_n))= r_{nm}(g_n) \cdot s_{n_0m}(g'_n\cdot \alpha_n)$$
\end{proof}
\section{Une illustration galoisiennne}
On considère le groupe diédral infini $\mathcal{D}_{\infty} = \mathbb{Z} \rtimes \mathbb{Z}/2 \mathbb{Z}$ et les éléments $c=(0,1)$ et $\sigma= (1,0)$ qui engendrent le groupe.  Par interprétation géométrique, on peut dire que $\sigma=(1,0)$ est une rotation et $c= (0,1)$ une symétrie axiale. On voit que les éléments $c$ et $\sigma c=(1,1)$ engendrent librement $\mathcal{D}_{\infty}$. Si bien que $\mathcal{D}_{\infty}$ peut-être vu comme le produit libre $\mathbb{Z}/2\mathbb{Z}* \mathbb{Z}/2\mathbb{Z}$. Ainsi $\mathcal{D}_{\infty}$ est un amalgame de deux copies de $\mathbb{Z}/2\mathbb{Z}$ sur le sous-groupe trivial.
\vskip 2mm
\noindent
On se donne $p$ un nombre premier et on considère la base de filtre $((p^n\mathbb{Z},0))_{n\geq 0}$ de sous-groupes distingués dans $\mathcal{D}_{\infty}$ vu comme un produit semi-direct et on note $U_n=(p^n\mathbb{Z},0)$. Pour tout $n \in \mathbb{N}^*$, le quotient $\mathcal{D}_{\infty}/U_n$ est égal au groupe diédral $D_{p^n}$. La famille des groupes $(D_{p^n})_n$ forment un système projectif dont la limite projective est le groupe $\widehat{D_{p^{\infty}}}= \mathbb{Z}_{p} \rtimes \mathbb{Z}/2 \mathbb{Z}$. Par conséquent $ \widehat{D_{p^{\infty}}}$ est la complétion profinie de $\mathcal{D}_{\infty}$ relativement à la base de filtre $(U_n)_n$. Puisque $D_{\infty}$ est dense dans $\widehat{D_{p^{\infty}}} $, ce dernier est donc un groupe profini presqu'amalgamé.
\vskip 2mm
\noindent
Dans cette partie nous allons explicitement retrouver ce résultat en appliquant le résultat  (théorème $3.7$) de la section précédente.
\vskip 2mm

Pour tout entier naturel $ n$ non nul, considérons le groupe diédral $D_{p^n}= \mathbb{Z}/p^n\mathbb{Z} \rtimes \mathbb{Z}/2\mathbb{Z}$ et le système projectif $(D_{p^n}, \varphi_{mn})_{n \in \mathbb{N}}$, dont la limite projective est le groupe profini $\widehat{D_{p^{\infty}}}= \mathbb{Z}_p \rtimes \mathbb{Z}/2\mathbb{Z}$. Pour $m \geq n $, le morphisme $ \varphi_{mn} :  D_{p^m} \to D_{p^n}$  est défini par :
$$ \varphi_{mn}(x \ \mod \ (p^m),a)=(x \ \mod \ (p^n),a).$$
Nous allons maintenant construire un proarbre sur lequel notre groupe profini $\widehat{D_{p^{\infty}}}$ agira.
\vskip 2mm
\noindent
Nous partons de  $ \mathcal{D}_{\infty} = \mathbb{Z} \rtimes \mathbb{Z}/2 \mathbb{Z}$ et on considère les éléments $g_1= (0,1)$ et $g_2= (1,1)$. On pose $G_1=<g_1> \simeq \mathbb{Z}/2 \mathbb{Z} $ et $G_2 =<g_2>  \simeq \mathbb{Z}/2 \mathbb{Z} $.
On obtient les classes suivant les sous-groupes $G_1$ et $G_2$ :
$\mathcal{D}_{\infty}/G_1=\{\{(n,0), (n,1)\} / n \in \mathbb{Z} \}$ et $\mathcal{D}_{\infty}/G_2=\{\{(n,0), (n+1,1)\} / n \in \mathbb{Z} \}.$
\vskip 2mm
\noindent
Soit $\Gamma$ l'arbre dont l'ensemble des sommets $
\mathcal{S}(\Gamma) =\mathcal{D}_{\infty}/G_1 \sqcup \mathcal{D}_{\infty}/G_2\ $ et l'ensemble des arêtes $\mathcal{A}(\Gamma)= D_\infty$. Pour $n \in \mathbb{Z}$, on pose $P_n =\{(n,0), (n,1)\}$ et $Q_n = \{(n,0), (n+1,1)\}$, ce sont les sommets de $\Gamma$. 
\vskip 2mm
\noindent
Soit maintenant $(n,g)\in  A(\Gamma)= D_\infty$. On a $(\o(n,g))=(n,g)\ \mod \ (G_1)= \{(n,0), (n,1)\}=P_n$ et 
$$
\t(n,g) = \left\{
    \begin{array}{ll}
         \{(n,0), (n+1,1)\}=Q_n& \mbox{si } g=0 \\
        \{(n-1,0), (n,1)\}=Q_{n-1} &\mbox{si } g=1 .
    \end{array}
\right.
$$
On en déduit que l'arbre $\Gamma$ est : 
\vskip 2mm
\noindent
\begin{center}
\begin{tikzpicture}
  \foreach \x in {0}
    \draw [moyen] (1.5*\x+0.1,0) -- (1.5*\x+1.4,0)
      (1.5*\x,0) node {\hbox{\tiny $\bullet$}} node[sloped,above=1pt] {$P_{\x}$};
   \foreach \x in {-2}
    \draw [moyen] (1.5*\x+0.1,0) -- (1.5*\x+1.4,0)
      (1.5*\x,0) node {\hbox{\tiny $\bullet$}} node[sloped,above=1pt] {$P_{-1}$};  
  \foreach \x in {2}
    \draw [moyen] (1.5*\x+0.1,0) -- (1.5*\x+1.4,0)
      (1.5*\x,0) node {\hbox{\tiny $\bullet$}} node[sloped,above=1pt] {$P_{1}$};       
    \draw [moyen]  (-3.1,0)--(-4.4,0)  (-4.7,0) node {$\cdots$} (6.2,0) node {$\cdots$};
 \foreach \x in {-1}
     \draw [moyen]  (1.5*\x+1.4,0) -- (1.5*\x+0.1,0) 
      (1.5*\x,0) node {\hbox{\tiny $\bullet$}} node[sloped,above=1pt] {$Q_{\x}$};
 \foreach \x in {1}
     \draw [moyen]  (1.5*\x+1.4,0) -- (1.5*\x+0.1,0) 
      (1.5*\x,0) node {\hbox{\tiny $\bullet$}} node[sloped,above=1pt] {$Q_{0}$};  
 \foreach \x in {3}
     \draw [moyen]  (1.5*\x+1.4,0) -- (1.5*\x+0.1,0) 
      (1.5*\x,0) node {\hbox{\tiny $\bullet$}} node[sloped,above=1pt] {$Q_{1}$};        
   
\end{tikzpicture}
\end{center}
\vskip 2mm
\noindent 
Pour $n \geq 1$ variable,  les graphes quotient $\Gamma_n=\Gamma/U_n$ ci-après  
\vskip 2mm
\noindent
\begin{center}
\begin{tikzpicture}[scale=2]
  \foreach \x in {-1}
    \path[draw,moyenn] (0,0) +(60*\x+6:1cm) arc (60*\x+6:60*\x+54:1cm)
      (60*\x:1cm) node {\hbox{\tiny $\bullet$}} ++(60*\x:6mm) node {$ Q_{ \overline {p^n \x}}$};
  \foreach \x in {0}
    \path[draw,moyen] (0,0) +(60*\x+6:1cm) arc (60*\x+6:60*\x+54:1cm)
      (60*\x:1cm) node {\hbox{\tiny $\bullet$}} ++(54*\x:4mm) node {$ P_{\overline \x}$};
    \foreach \x in {1}
    \path[draw,moyenn] (0,0) +(60*\x+6:1cm) arc (60*\x+6:60*\x+54:1cm)
      (60*\x:1cm) node {\hbox{\tiny $\bullet$}} ++(54*\x:4mm) node {$ Q_{\overline 0}$};
    \path[draw] (0,0) ++(60*2:1cm) node {\hbox{\tiny $\bullet$}} +(60*2:4mm) node {$ P_{ \overline 1}$};
    \draw[blue,loosely dotted] (0,0) ++(60*2+6:1cm) arc (60*2+6:360-(60*1+6):1cm);
\end{tikzpicture}
\end{center}
définissent un prographe $\mathcal{P}=(\Gamma_n, \theta_{mn})_{n \geq 1}$. 
Pour $m \geq n$, l'épimorphisme $\theta_{mn}$ est défini, pour $x \in \mathbb{Z}$, par :
\begin{center}
$\begin{array}{ccccc}
\theta_{mn} & : & \Gamma_m & \to & \Gamma_n \\
 & & P_x \ \mod \ p^m  & \mapsto & P_x \ \mod \ p^n \\
 & & Q_x \ \mod \ p^m  & \mapsto & Q_x \ \mod \ p^n \\
 & & [P_x \ \mod \ p^m, Q_x \ \mod \ p^m ]  & \mapsto &[ P_x \ \mod  \ p^n,  Q_x \ \mod \ p^n]  \\
  & & [P_{x+1} \ \mod \ p^m, Q_x \ \mod \ p^m ]  & \mapsto &[ P_{x+1} \ \mod \ p^n,  Q_x \ \mod \ p^n]  \\
\end{array}$
\end{center}
$\theta_{mn}$ est clairement surjectif.
\vskip 2mm
Pour tout $n \geq 1$, $D_{p^n}= \mathbb{Z}/p^n\mathbb{Z}\rtimes \mathbb{Z}/2\mathbb{Z}$ agit sur le graphe $\Gamma_n$, le segment $[P_{\overline 0},Q_{\overline 0}]$ est un domaine fondamental pour cette action. Soit $a \in \mathbb{Z}$, on note $\overline{a}$ sa classe modulo $p^n\mathbb{Z}$. Les éléments $D_{p^n}$ s'écrivent sous la forme $(\overline{a}, b)$, où $b \in  \mathbb{Z}/2\mathbb{Z}$. L'action de $D_{p^n}$ sur $\Gamma_n$ est donnée par :
 $$
(\overline{a},0)\cdot [P_{\overline{n}}-Q_{\overline{n}}] = \left\{
    \begin{array}{ll}
         [P_{\overline{n+k}}-Q_{\overline{n+k}}]& \mbox{si } \overline{a}=\overline{2k} \\
         \left[ P_{\overline{n+k+1}}-Q_{\overline{n+k}} \right ]& \mbox{si } \overline{a}=\overline{2k+1}
    \end{array}
\right.
$$
$$
(\overline{a},1)\cdot [P_{\overline{n}}-Q_{\overline{n}}] = \left\{
    \begin{array}{ll}
         [P_{\overline{n-k}}-Q_{\overline{n-k}}]& \mbox{si } \overline{a}=\overline{2k} \\
         \left[ P_{\overline{n-k}}-Q_{\overline{n-k-1}} \right ]& \mbox{si } \overline{a}=\overline{2k+1}
    \end{array}
\right.
$$
$$
(\overline{a},0)\cdot [P_{\overline{n+1}}-Q_{\overline{n}}] = \left\{
    \begin{array}{ll}
         [P_{\overline{n+k+1}}-Q_{\overline{n+k}}]& \mbox{si } \overline{a}=\overline{2k} \\
         \left[ P_{\overline{n+k+1}}-Q_{\overline{n+k+1}} \right ]& \mbox{si } \overline{a}=\overline{2k+1}
    \end{array}
\right.
$$
$$
(\overline{a},1)\cdot [P_{\overline{n+1}}-Q_{\overline{n}}] = \left\{
    \begin{array}{ll}
         [P_{\overline{n-k+1}}-Q_{\overline{n-k}}]& \mbox{si } \overline{a}=\overline{2k }\\
         \left[ P_{\overline{n-k+1}}-Q_{\overline{n-k+1}} \right ]& \mbox{si } \overline{a}=\overline{2k+1}
    \end{array}
\right.
$$ 
Le prographe $\mathcal{P}=(\Gamma_n, \theta_{mn})_n$ est en fait un proarbre. En effet on a $\limind_s \Gamma_n \sim \Gamma$ où s est la section graphiquement cohérente  arborescente définie ci-dessous .
\vskip 2mm
\noindent	
Pour $m>n$, on pose, pour $x \in \{0, \cdots, [p^n/2]\}$,
\begin{center} 
$\begin{array}{ccccc}
s_{nm} & : & \Gamma_n & \to & \Gamma_m \\
 & & P_x \ \mod \ p^n  & \mapsto & P_x \ \mod \ p^m \\
 & & Q_x \ \mod \ p^n  & \mapsto & Q_x \ \mod \ p^m \\
 & & P_{p^n-x} \  \mod \ p^n  & \mapsto & P_{p^n-x} \ \mod \ p^m \\
 & &Q_{p^n-x} \ \mod \ p^n  & \mapsto & Q_{p^n-x} \ \mod \ p^m \\
 & & [P_x \ \mod \ p^n- Q_x \ \mod \ p^n ]  & \mapsto &[ P_x  \ \mod \ p^m-  Q_x \ \mod \ p^m]  \\
  & & [P_{x} \ \mod \ p^n- Q_{x-1} \ \mod \ p^n ]  & \mapsto &[ P_{x} \ \mod  \ p^m-  Q_{x-1} \ \mod \ p^m]  \\
\end{array}$
\end{center}
et, pour $x \in \{1, \cdots, [p^n/2]\}$,
\begin{center} 
$\begin{array}{ccccc}
s_{nm} & : & \Gamma_n & \to & \Gamma_m \\
 & & [P_{p^n-x} \ \mod \ p^n- Q_{p^n-x} \ \mod \ p^n ]  & \mapsto &[ P_{p^m-x} \ \mod \ p^m-  Q_{p^m-x} \ \mod \ p^m]  \\
  & & [P_{p^n-x} \ \mod \ p^n- Q_{p^n-x-1} \ \mod \ p^n ]  & \mapsto &[ P_{p^m-x} \ \mod \ p^m-  Q_{p^m-x-1} \ \mod \ p^m]  \\
\end{array}$
\end{center}
Ces formules sont valables pour $p \geq 3$, pour $p=2$ on peut considérer les sections suivantes : On envoie $\Gamma_n$  sur la partie du dessus de $\Gamma_m$, pour $m \geq n$.\\
\vskip 2mm
\noindent
\begin{center}
\begin{tikzpicture}[scale=1]
\foreach \x in {7}
  \path[draw,gros] (0,0) +(90+45*\x:15mm) node (sommet\x) {\hbox{\large $\bullet$}} +(90+45*\x:19mm) node {$\overline{Q_3}$} +(90+45*\x+20:16mm) node (arete\x) {}
    +(90+45*\x-45+5:15mm) arc (90+45*(\x-1)+5:90+45*\x-5:15mm);
\foreach \x in {0}
  \path[draw,gross] (0,0) +(90+45*\x:15mm) node (sommet\x) {\hbox{\large $\bullet$}} +(90+45*\x:19mm) node {$\overline {P_0}$} +(90+45*\x+20:16mm) node (arete\x) {}
    +(90+45*\x-45+5:15mm) arc (90+45*(\x-1)+5:90+45*\x-5:15mm);
\foreach \x in {1}
  \path[draw,gros] (0,0) +(90+45*\x:15mm) node (sommet\x) {\hbox{\large $\bullet$}} +(90+45*\x:19mm) node {$\overline {Q_0}$} +(90+45*\x+20:16mm) node (arete\x) {}
    +(90+45*\x-45+5:15mm) arc (90+45*(\x-1)+5:90+45*\x-5:15mm);
    \foreach \x in {2}
  \path[draw,gross] (0,0) +(90+45*\x:15mm) node (sommet\x) {\hbox{\large $\bullet$}} +(90+45*\x:19mm) node {$\overline {P_1}$} +(90+45*\x+20:16mm) node (arete\x) {}
    +(90+45*\x-45+5:15mm) arc (90+45*(\x-1)+5:90+45*\x-5:15mm);
\foreach \x in {3}
  \path[draw,moyen] (0,0) +(90+45*\x:15mm) node (sommet\x) {\hbox{\tiny $\bullet$}} +(90+45*\x:19mm) node {$\overline{Q_1}$} +(90+45*\x+20:16mm) node (arete\x) {}
    +(90+45*\x-45+5:15mm) arc (90+45*(\x-1)+5:90+45*\x-5:15mm);
\foreach \x in {4}
  \path[draw,moyenn] (0,0) +(90+45*\x:15mm) node (sommet\x) {\hbox{\tiny $\bullet$}} +(90+45*\x:19mm) node {$\overline{P_2}$} +(90+45*\x+20:16mm) node (arete\x) {}
    +(90+45*\x-45+5:15mm) arc (90+45*(\x-1)+5:90+45*\x-5:15mm);
\foreach \x in {5}
  \path[draw,moyen] (0,0) +(90+45*\x:15mm) node (sommet\x) {\hbox{\tiny $\bullet$}} +(90+45*\x:19mm) node {$\overline{Q_2}$} +(90+45*\x+20:16mm) node (arete\x) {}
    +(90+45*\x-45+5:15mm) arc (90+45*(\x-1)+5:90+45*\x-5:15mm);
    \foreach \x in {6}
  \path[draw,moyen] (0,0) +(90+45*\x:15mm) node (sommet\x) {\hbox{\tiny $\bullet$}} +(90+45*\x:19mm) node {$\overline{P_3}$} +(90+45*\x+20:16mm) node (arete\x) {}
    +(90+45*\x-45+5:15mm) arc (90+45*(\x-1)+5:90+45*\x-5:15mm);
\foreach \x in {0}
  \path[draw,gross] (0,4) +(90+90*\x:9mm) node (sommet\x) {\hbox{\large $\bullet$}} +(90+90*\x:13mm) node {$\overline{P_0}$} +(90+90*\x+20:10mm) node (arete\x) {}
    +(90+90*\x-90+8:9mm) arc (90+90*(\x-1)+8:90+90*\x-8:9mm);
\foreach \x in {1}
  \path[draw,gros] (0,4) +(90+90*\x:9mm) node (sommet\x) {\hbox{\large $\bullet$}} +(90+90*\x:13mm) node {$\overline{Q_0}$} +(90+90*\x+20:10mm) node (arete\x) {}
    +(90+90*\x-90+8:9mm) arc (90+90*(\x-1)+8:90+90*\x-8:9mm);
\foreach \x in {2}
  \path[draw,moyenn] (0,4) +(-270+90*\x:9mm) node (sommet\x) {\hbox{\tiny $\bullet$}} +(-270+90*\x:13mm) node {$\overline{P_1}$} +(-270+90*\x+20:10mm) node (arete\x) {}
    +(-270+90*\x-90+8:9mm) arc (-270+90*(\x-1)+8:-270+90*\x-8:9mm);
\foreach \x in {3}
  \path[draw,moyen] (0,4) +(-270+90*\x:9mm) node (sommet\x) {\hbox{\tiny $\bullet$}} +(-270+90*\x:13mm) node {$\overline{Q_1}$} +(-270+90*\x+20:10mm) node (arete\x) {}
    +(-270+90*\x-90+8:9mm) arc (-270+90*(\x-1)+8:-270+90*\x-8:9mm);
\draw [->,>=angle 45] (2.5,3.5) -- node [right] {$s_{12}$} (2.5,0.5);
\end{tikzpicture}
\end{center}
\vskip 2mm
\noindent
Considérons un indice $i \in \mathbb{N}$ et une arête $a \in   A(\Gamma_i)$. Si 
$$
a \neq \left\{
    \begin{array}{ll}
         [P_{[P^{i}/2]-1}, Q_{[P^{i}/2]]}& \mbox{si } p=2 \\
         \left[ Q_{ \left[ P^{i}/2\right]}, P_{\left[P^{i}/2 \right]+1} \right ]& \mbox{si } p \geq 3
    \end{array}
\right.
$$
alors l'indice $i_0=i$ convient pour $a$. Si maintenant 
$$
a = \left\{
    \begin{array}{ll}
         [P_{[P^{i}/2]-1}, Q_{[P^{i}/2]]}& \mbox{si } p=2 \\
         \left[ Q_{ \left[ P^{i}/2\right]}, P_{\left[P^{i}/2 \right]+1} \right ]& \mbox{si } p \geq 3
    \end{array}
\right.
$$
alors l'indice $i_0=i+1$ convient pour $a$.
La section $s$ est donc graphiquement cohérente. 
\vskip 2mm
\noindent

Pour $m > n$, on considère la section $r_{nm}$ de $\varphi_{mn}$, pour $x \in $ $\{0, \cdots, [p^n/2]\}$, définie par :
$$r_{nm}(x  \ \mod \ (p^n),a)= (x \ \mod \ (p^m),a)$$
$$r_{nm}(p^n-x \ \mod \ (p^n) ,a)=(p^m-x \ \mod \ (p^m),a).$$ 
Le système $(r_{nm})_n$ est alors une section dans le sens où $\limind_r D_{p^n}$ (isomorphe à $D_\infty=\mathbb{Z} \rtimes \mathbb{Z}/2 \mathbb{Z}$) est un sous groupe de $\widehat{D_{p^\infty}}$.
\vskip 2mm
\noindent
Le groupe $\widehat{D_{p^{\infty}}}$ agit à domaine fondamental segment sur un proarbre. On en déduit, d'après le théorème $3.7$ qu'il est presqu'amalgamé.
\vskip 2mm

Illustrons maintenant sur une situation galoisienne ce que nous venons de faire.
\vskip 2mm
\noindent
Considérons un système cohérent de racine primitive $p^n$-ième de l'unité $(\xi_{p^n})_n$, c'est à dire pour tout $\xi_{p^n}$ une racine primitive $p^n$-ième de l'unité telle que si $p^m=kp^n$ alors $=\xi_{p^n}=\xi_{p^m}^{k}$. Par exemple on considère $\xi_{p^n}=\exp(\frac{2i\pi}{p^n})$. 
\vskip2mm
\noindent
Pour tout entier naturel $n \geq 1$, on considère l'extension galoisienne $\mathbb{C}((t^{\frac{1}{p^{n}}}))/\mathbb{R}((t))$ du corps de séries de Laurent à coefficients dans $\mathbb{R}$. Son groupe de Galois : $G_n= \Gal( \mathbb{C}((t^{\frac{1}{p^{n}}}))/\mathbb{R}((t)))$ est isomorphe à $ D_{p^n}$. Plus précisément si l'on note $c$ la conjugaison complexe et soit l'automorphisme  $\sigma : \mathbb{C}((t^{\frac{1}{p^{n}}}) \to \mathbb{C}((t^{\frac{1}{p^{n}}})$ définie par : $\sigma(t^{\frac{1}{p^{n}}})= \xi_{p^n}t^{\frac{1}{p^{n}}}$, le groupe $G_n:= \Gal( \mathbb{C}((t^{\frac{1}{p^{n}}}))/\mathbb{R}((t)))$ est engendré par $c$ et $\sigma$.
\vskip 2mm

Soit le système projectif $(G_n, \res_{mn})_{n \in \mathbb{N^*}}$ où  $\res_{mn}$ désigne l'application de restriction. La $p$-partie du corps des série de Puiseux sur $\mathbb{C}$ correspond au corps $\Puis_p(\mathbb{C})=\bigcup_{n \in \N} \mathbb{C}((t^{\frac{1}{p^{n}}}))$. Il s'agit d'une extension galoisienne sur $\mathbb{R}((t))$ de groupe de Galois $\limproj_n G_n  \simeq \widehat{D_{p_{\infty}}}$. En particulier ce groupe est presqu'amalgamé. 
\vskip 2mm
\noindent
On peut voir cette dernière propriété directement grâce à l'arithmétique de ce corps : pour tout $n \geq 1$, on considère le graphe $\Gamma_n$ donné par :

$$\mathcal{S}(\Gamma_n)= \{ \xi_{2p^n}^{2k}t^{\frac{1}{p^{n}}}, \xi_{2p^n}^{2k+}t^{\frac{1}{p^{n}}} k=0, \cdots, p^{n}-1\}$$
$$\mathcal{A}(\Gamma_n)= \{ [\xi_{2p^n}^{2k}t^{\frac{1}{p^{n}}};\xi_{2p^n}^{2k+1}t^{\frac{1}{p^{n}}} ],[\xi_{2p^n}^{2k+2}t^{\frac{1}{p^{n}}};\xi_{2p^n}^{2k+1}t^{\frac{1}{p^{n}}} ], k=0, \cdots, p^{n}-1\}.$$
\vskip 2mm
\noindent
Pour $n \geq 1$ donné, on peut définir un épimorphisme de graphe donné par :
\begin{center}
$\begin{array}{ccccc}
\theta_{n} & : & \Gamma_n & \to & \Gamma_{n-1} \\
 & & \xi_{2p^n}^{2k}t^{\frac{1}{p^{n}}}  & \mapsto& (\xi_{2p^n}^{2k}t^{\frac{1}{p^{n}}})^p \\
 & & \xi_{2p^n}^{2k+1}t^{\frac{1}{p^{n}}}  &\mapsto & (\xi_{2p^n}^{2k+1}t^{\frac{1}{p^{n}}})^p \\
 & & [\xi_{2p^n}^{2k}t^{\frac{1}{p^{n}}};\xi_{2p^n}^{2k+1}t^{\frac{1}{p^{n}}} ]  & \mapsto &[(\xi_{2p^n}^{2k}t^{\frac{1}{p^{n}}})^p; (\xi_{2p^n}^{2k+1}t^{\frac{1}{p^{n}}})^p ] \\
   & & [\xi_{2p^n}^{2k+2}t^{\frac{1}{p^{n}}};\xi_{2p^n}^{2k+1}t^{\frac{1}{p^{n}}} ]  & \mapsto &[(\xi_{2p^n}^{2k+2}t^{\frac{1}{p^{n}}})^p; (\xi_{2p^n}^{2k+1}t^{\frac{1}{p^{n}}})^p ] \\
\end{array}$
\end{center}
Il est clair que $(\Gamma_n, \theta_n)_n$ forme un prographe et pour tout $n \geq 1$, le groupe $G_n$ agit naturellement sur $\Gamma_n$. L'action est celle donnée par l'image dans le corps $\mathbb{C}((t^{\frac{1}{p^{n}}}))$ par les automorphismes $c$ et $\sigma$ de $G_n=\Gal( \mathbb{C}((t^{\frac{1}{p^{n}}}))/\mathbb{R}((t)))$. 
\vskip 2mm
\noindent
On voit que cette action est exactement celle que nous avions décrite dans la première partie de cet exemple. On en déduit  que $\Gal(\Puis_p(\mathbb{C})/\mathbb{R}((t)))$ agit à domaine fondamental segment sur un proarbre et donc que ce groupe est presqu'amalgamé.
\vskip2mm
Cet exemple incite à considérer d'autres situations arithmétiques explicites pour lesquelles on ne sait pas déterminer le groupe de Galois mais pour lesquelles on pourrait obtenir la propriété d'être presqu'amalgamé.
\nocite{*}
\bibliographystyle{alpha} 
\bibliography{biblio} 
\vskip 2mm
\noindent
Laboratoire de Mathematiques Nicolas Oresme Normandie Univ, UNICAEN, CNRS 14032 Caen FRANCE 
\vskip 2mm
\noindent
E-mail address: Ndeye-coumba.sarr@unicaen.fr

\end{document}